\documentclass[10pt,reqno]{amsart}
\usepackage{floatflt}
\usepackage{xypic}
\usepackage{rotating}
\usepackage{bbm}
\usepackage{mathrsfs}
\usepackage{diagbox}
\usepackage{dutchcal}
\usepackage{extarrows}
\usepackage{cite}
\usepackage{amsfonts} 
\usepackage[dvipsnames,usenames]{color}
\usepackage{graphicx,latexsym,bm,amsmath,amssymb,verbatim,multicol,lscape}
\makeatletter
\@namedef{subjclassname@2020}{
\textup{2020} Mathematics Subject Classification}
\makeatother
\newtheorem{theorem}{Theorem} [section]

\newtheorem{conjecture}[theorem]{Conjecture}
\newtheorem{lemma}[theorem]{Lemma}

\numberwithin{equation}{section}

\def\bew{\begin{widetext}}
\def\eew{\end{widetext}}
\def\be{\begin{equation}}
\def\ee{\end{equation}}
\def\bea{\begin{eqnarray}}
\def\eea{\end{eqnarray}}

\allowdisplaybreaks
\begin{document}
\title{Gcd-closed sets and divisibility among power LCM matrices}

\begin{abstract}
Let $a,b$, and $n$ be positive integers and let $S=\{x_1, \cdots, x_n\}$
be a set of $n$ distinct positive integers. We denote by $[S^a]$ 
the $n\times n$ matrix having the $a$th power of the 
least common multiple of $x_i$ and $x_j$ as
its $(i,j)$-entry. For $x\in S$, let $G_{S}(x)=\{d\in S: d<x, d|x \
{\rm and} \ (d|y|x, y\in S)\Rightarrow y\in \{d,x\}\}$. In this article,
we show that $[S^a]$ divides $[S^b]$ in the ring of $n\times n$ matrices 
over the integers if $a|b$ and $S$ is gcd closed (i.e., $\gcd(x_i,x_j)\in S$ 
for all integers $i$ and $j$ with $1\le i,j\le n$) and satisfies 
the condition $\mathcal G$ (that is, for any $x\in S$, 
either $G_S(x)$ contains at most one element, or $G_S(x)$
contains at least two elements and satisfies that ${\rm lcm}(y_1,y_2)=x$ and
$\gcd(y_1,y_2)\in G_S(y_1)\cap G_S(y_2)$ for any $\{y_1,y_2\}\subseteq G_S(x)$).
This confirms a conjecture of Hong proposed in (2026, 
Bull. Aust. Math. Soc., 113, 231-243).
\end{abstract}

\author[G.Y. Zhu]{Guangyan Zhu}
\address{School of Mathematics and Statistics,
Hubei Minzu University, Enshi 445000, P.R. China}
\email{2009043@hbmzu.edu.cn}
\thanks{The research was supported in part by the Startup
Research Fund of Hubei Minzu University for Doctoral Scholars
(Grant No. BS25008)}
\keywords{Divisibility, power GCD matrix, power LCM matrix,
greatest-type divisor, gcd-closed set, condition $\mathcal{G}$.}
\subjclass[2020]{Primary 11C20; Secondary 11A05, 15B36}

\maketitle
		
\section{Introduction}
Let $a,b$ and $n$ be positive integers.
For any integers $x$ and $y$, we denote by $(x, y)$ the greatest common
divisor of integers $x$ and $y$ and by $[x,y]$ their least common multiple.
Let $\mathbb Z$ denote the ring of integers and let $|T|$ stand for the
cardinality of a finite set $T$ of integers. Let $f$ be an arithmetic
function and let $S=\{x_1, \cdots, x_n \}$. Let $(f(S))$ and $(f[S])$
denote the $n\times n$ matrices whose $(i,j)$-entries are $f((x_i, x_j))$
and $f([x_i, x_j])$, respectively. Define $\langle n\rangle:=\{1,\cdots, n\}$.
For any real number $e$, let $\xi_e$ be the arithmetic function defined for any
positive integer $x$ by $\xi_e(x)=x^e$. The $n\times n$ matrix $(\xi_a(x_i, x_j))
=((x_i, x_j)^a)$ (abbreviated by $(S^a)$) and $(\xi_a[x_i, x_j])=([x_i, x_j]^a)$
(abbreviated by $[S^a]$) are called {\it $a$th power GCD matrix} on $S$ and
{\it $a$th power LCM matrix} on $S$, respectively. The set $S$ is
said to be {\it factor closed} (written as FC for short) if
$(x\in S$, $d>0$, $d|x)\Rightarrow d\in S$. We say that $S$ is
{\it gcd closed} if $S$ contains $(x_i, x_j)$ for all integers
$i$ and $j$ with $1\le i,j\le n$. Obviously, any FC set contains
1 and is gcd closed but the converse is not true. For instance,
if the set $S$ is FC, then for any integer $x>1$, the set
$xS:=\{xy| y\in S\}$ is gcd closed but not FC since $1\not\in xS$.
One hundred and fifty years ago, Smith \cite{[S-PLMS1875]} proved that
\begin{align}\label{eq1.1}
\det([x_i,x_j])=\prod_{k=1}^n\varphi(x_k)\pi(x_k)
\end{align}
if $S$ is FC,
where $\varphi$ is the Euler's totient function and
$\pi$ is the multiplicative function defined for the
prime power $p^r$ by $\pi(p^r)=-p$. After that,
many generalizations of Smith's determinant
(\ref{eq1.1}) and related results were published
(see, for example, \cite{[A-PJM1972]}-\cite{[LT-DM2018]}
and \cite{[TL-LAA2013]}-\cite{[ZY-PAMS2026]}). In particular,
an elegant result was gotten by Hong, Hu and Lin
\cite{{[HHL-AMH2016]}} stating that for any integer $n\ge 2$,
one has
$$
\det([i,j])_{2\le i,j\le n}=\Big(\prod_{k=1}^n
\varphi (k)\pi(k)\Big)\sum_{t=1\atop t \
\text{is square free}}^n\frac{t\mu(t)}{\varphi (t)},
$$
where $\mu$ is the M$\ddot{\rm o}$bius function and
an integer $x\ge 1$ is called {\it square free} if
$x$ is not divisible by the square of any prime number.

Hong \cite{[H-JA1999]} introduced the concept of greatest-type divisor
when he solved the famous Bourque-Ligh conjecture \cite{[BL-LAA1992]}.
For any integer $x\in S$, if
$(y<x, y|z|x \ {\rm and} \ y,z\in S)\Rightarrow z\in\{y,x\},$
then $y$ is called a {\it greatest-type divisor} of $x$. Let
$G_S(x):=\{y\in S: y \ \text{is a greatest-type divisor of}
\ x \ {\rm in} \ S\}$. Let $M_n({\mathbb Z})$ stand for the ring of
$n\times n$ matrices over the integers. In 1992, Bourque and Ligh \cite{[BL-LAA1992]}
proved that $(S)$ divides $[S]$ in the ring $M_n(\mathbb Z)$ if $S$ is FC.
Namely, there exists a $B\in M_n({\mathbb Z})$ such that $[S]=B(S)$
or $[S]=(S)B$. In 2002, Hong \cite{[H-LAA2002]} showed that such
a factorization is not true when $S$ is gcd closed and $\max_{x \in S}\{|{G_S(x)}|\}=2$.

On the other hand, Hong \cite{[H-LAA2008]} initiated the investigation of
divisibility among power LCM matrices. It was proved in \cite{[H-LAA2008]}
that $[S^a]|[S^b]$ if $a|b$ and $S$ is a divisor chain (that is,
$x_{\sigma(1)}|\cdots|x_{\sigma(n)}$ for a permutation $\sigma$ of
$\langle n\rangle$). Evidently, a divisor chain is gcd closed but
not conversely. Tan and Li \cite{[TL-LAA2013]} generalized Hong's
result \cite{[H-LAA2008]} by proving that $[S^a]|[S^b]$ holds
in the ring $M_{|S|}({\mathbb Z})$ if $a|b$ and $S$ consists
of finitely many coprime divisor chains with $1\in S$,
and that such divisibility relation is not true if
$a\nmid b$. Zhu and Li \cite{[ZL-BAMS2023]} confirmed
the conjecture of Hong raised in \cite{[H-LAA2008]}
stating that if $a|b$ and $S$ is a gcd-closed set with
$\max_{x\in S}\{|G_S(x)|\}=1$, then $[S^a]\mid [S^b]$
in the ring $M_n({\mathbb Z})$. In 2026, Hong \cite{[H-BAMS25]}
established the same divisibility result when $S$ is FC.
As in \cite{[H-BAMS25]}, for any set $S$ of positive integers and for
any $x\in S$ with $|G_S(x)|\ge 2$, we say that the two distinct greatest-type
divisors $y_1$ and $y_2$ of $x$ in $S$ {\it satisfy the condition
$\mathcal{G}$} if $[y_1,y_2]=x$ and $(y_1,y_2)\in G_S(y_1)\cap G_S(y_2)$.
We say that $x$ {\it satisfies the condition $\mathcal{G}$} if any two
distinct greatest-type divisors of $x$ in $S$ satisfy the condition $\mathcal{G}$.
Moreover, we say that a set $S$ of positive integers {\it satisfies the
condition $\mathcal{G}$} if any element $x$ in $S$ satisfies that either
$|G_S(x)|\le 1$, or $|G_S(x)|\ge 2$ and $x$ satisfies the condition $\mathcal{G}$.
In \cite{[H-BAMS25]}, Hong showed that any FC set is a gcd-closed
set satisfying the condition $\mathcal{G}$. Furthermore, Hong raised the
following conjecture in Section 3 of \cite{[H-BAMS25]}.

\begin{conjecture}{\rm {\cite[Conjecture 3.4]{[H-BAMS25]}}} \label{p1.1}
Let $a$ and $b$ be positive integers with $a|b$ and let $S$ be
a gcd-closed set satisfying the condition $\mathcal G$. Then
$[S^a]\mid[S^b]$ in the ring $M_{|S|}(\mathbb Z)$.
\end{conjecture}

For the case $\max_{x\in S}\{|G_S (x)|\}=1$, by the theorem of Zhu and Li
\cite{[ZL-BAMS2023]} we know that this conjecture is true. For the case
$\max_{x\in S}\{|G_S (x)|\}=2$, by \cite{[WZ]} one knows that Conjecture
\ref{p1.1} holds. Furthermore, Zhu, Luo and Wan \cite{[ZLW]} confirmed
Conjecture \ref{p1.1} for the case $\max_{x\in S}\{|G_S (x)|\}=3$. But
Conjecture \ref{p1.1} is still kept open when $S$ is the general gcd-closed
set satisfying the condition $\mathcal{G}$.

In this paper, our main goal is to study the divisibility among power
LCM matrices. We introduce a new method to investigate Conjecture \ref{p1.1}.
The main result of this paper can be stated as follows.

\begin{theorem}\label{theorem 1.3}
Let $S$ be a gcd-closed set satisfying the condition
$\mathcal{G}$  and let $a$ and $b$ be positive integers
such that $a|b$. Then the $a$th power LCM matrix $[S^a]$
divides the $b$th power LCM matrix $[S^b]$
in the ring $M_{|S|}(\mathbb Z)$.
\end{theorem}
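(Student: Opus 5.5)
The plan is to write the quotient $[S^b][S^a]^{-1}$ explicitly through the standard factorization of power LCM matrices on gcd-closed sets, and then to use condition $\mathcal G$ to reduce integrality to a divisibility $(r^a-1)\mid(r^b-1)$.

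\textbf{Step 1: factor $[S^a]$.} Order $S$ so that $x_1<\cdots<x_n$. Since $S$ is gcd closed, we have $[x_i,x_j]^a=x_i^ax_j^a(x_i,x_j)^{-a}$, so
$$[S^a]=\Delta_a\,(\xi_{-a}(S))\,\Delta_a,\qquad \Delta_a={\rm diag}(x_1^a,\dots,x_n^a).$$
By the classical structure theorem for GCD matrices on gcd-closed sets we have $(f(S))=E\,{\rm diag}(\alpha_f(x_1),\dots,\alpha_f(x_n))\,E^{T}$. Here $E=(e_{ij})$ is the $0/1$ incidence matrix with $e_{ij}=1$ iff $x_j\mid x_i$, and
$$\alpha_f(x_k)=\sum_{J\subseteq G_S(x_k)}(-1)^{|J|}f\big(\gcd(J\cup\{x_k\})\big),$$
which is the inclusion--exclusion form of $\sum_{d\mid x_k,\ d\nmid x_t\ (x_t<x_k)}(f*\mu)(d)$. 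Moreover $E^{-1}=(\mu_S(x_j,x_i))$ is the Möbius function of the divisibility poset on $S$. Hence
$$[S^b][S^a]^{-1}=[S^b]\,\Delta_a^{-1}(E^{T})^{-1}\,{\rm diag}\big(\alpha_{\xi_{-a}}(x_k)^{-1}\big)\,E^{-1}\Delta_a^{-1}.$$

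\textbf{Step 2: compute $\alpha$ under condition $\mathcal G$.} For $x\in S$ with $G_S(x)=\{y_1,\dots,y_m\}$, put $r_i=x/y_i$. The conditions $[y_i,y_j]=x$ and $(y_i,y_j)\in G_S(y_i)\cap G_S(y_j)$ force $(y_i,y_j)=x/(r_ir_j)$ with $(r_i,r_j)=1$. Iterating this, via gcd-closure and an induction down the poset, gives $\gcd(y_j:j\in J)=x/\prod_{j\in J}r_j$. Consequently
$$\alpha_{\xi_{-a}}(x)=x^{-a}\prod_{i=1}^{m}\big(1-r_i^{a}\big).$$
The same argument shows that the interval below $x$ spanned by $G_S(x)$ is Boolean, so $\mu_S$ is $\pm1$ there and $0$ elsewhere.

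\textbf{Step 3: reduce to one divisibility statement.} Multiplying out, the $(i,j)$ entry of $[S^b][S^a]^{-1}$ is
$$\sum_k \frac{x_k^{a}}{\prod_{y\in G_S(x_k)}(1-r_y^{a})}\,g_i(x_k)\,h_j(x_k),$$
where
$$g_i(x_k)=x_k^{-a}\sum_{J\subseteq G_S(x_k)}(-1)^{|J|}\Big[x_i,\tfrac{x_k}{\prod_{J}r}\Big]^{b}\Big(\tfrac{x_k}{\prod_J r}\Big)^{-a},$$
and $h_j$ is the analogous column sum, which is an integer combination of powers of divisors of $x_k$. So it suffices to show that $x_k^{a}\prod_{y}(1-r_y^{a})$ divides the summand, or at least the whole sum over $k$.

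Prime by prime, using the pairwise coprimality of the $r$'s, the inner alternating sum factors as
$$[x_i,x_k]^b x_k^{-a}\prod_{y\in G_S(x_k)}\Big(1-s_y^{-b}r_y^{a}\Big),\qquad s_y=\frac{[x_i,x_k]}{[x_i,y]}\ \Big|\ r_y .$$
The hoped-for conclusion is $(r_y^a-1)\mid(r_y^b-1)$, which holds since $a\mid b$. This is exactly where that hypothesis enters, and it is the only place. Clearing the remaining denominators should then be routine bookkeeping with the powers of $x_k$ and $[x_i,x_k]$.

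\textbf{Main obstacle.} The factor-by-factor divisibility only works cleanly when each $s_y\in\{1,r_y\}$. An intermediate value $1<s_y<r_y$ occurs when $x_i$ shares only part of $r_y$. For such factors I would not try to prove integrality termwise. Instead I would regroup the sum over $k$ along the chains $y\to x_k$ in the Boolean intervals, using induction on $|S|$ through the maximal element, in the spirit of the known cases $\max|G_S(x)|\le 3$. The aim is that the partial denominators telescope into factors of the form $(s^a-1)$ with $s\mid r_y$, each dividing the matching $(s^b-1)$. Making this telescoping precise for arbitrary $|G_S(x)|$ is the hard step.
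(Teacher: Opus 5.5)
Your reduction is sound as far as it goes. Pairwise coprimality of the $r_y$ already follows from $[y_i,y_j]=x_k$ alone. It gives $\gcd(J)=x_k/\prod_{y\in J}r_y$, your formula for $\alpha_{\xi_{-a}}(x_k)$, and the factorization
\[
H(i,k):=\frac{1}{\alpha_{\xi_{-a}}(x_k)}\sum_{x_r\mid x_k}\mu_S(x_r,x_k)\frac{[x_i,x_r]^b}{x_r^a}=[x_i,x_k]^b\prod_{y\in G_S(x_k)}\frac{1-s_y^{-b}r_y^{a}}{1-r_y^{a}},
\]
with the $(i,j)$ entry equal to $\sum_{x_j\mid x_k}\mu_S(x_j,x_k)H(i,k)/x_j^{a}$.

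\textbf{The gap.} You stop at the case $1<s_y<r_y$ and offer only an unspecified telescoping over $k$. The missing idea is that this case \emph{cannot occur}. Let $d=(x_i,x_k)$, which lies in $S$ by gcd-closure.
\begin{itemize}
\item If $d\mid y$, then $v_p(x_i)\le v_p(x_k)-v_p(r_y)$ for all $p\mid r_y$, so $s_y=r_y$.
\item If $d=x_k$, then $s_y=1$ trivially.
\item If $d\nmid y$ and $d\ne x_k$, then $d\mid y'$ for some other $y'\in G_S(x_k)$. Put $z_0=y'$ and $g_0=(y',y)$. Then $g_0\in G_S(z_0)$, $z_0/g_0=r_y$, and $d\nmid g_0$. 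While $d\neq z_t$, pick $z_{t+1}\in G_S(z_t)$ with $d\mid z_{t+1}$. Then $z_{t+1}\neq g_t$, and condition $\mathcal G$ at $z_t$ gives $g_{t+1}=(z_{t+1},g_t)\in G_S(z_{t+1})$ with $z_{t+1}/g_{t+1}=r_y$, $d\nmid g_{t+1}$, and $z_t/z_{t+1}$ coprime to $r_y$. The descent ends at some $z_t=d$ with $v_p(d)=v_p(x_k)$ for every $p\mid r_y$. Hence $[x_i,y]=[x_i,x_k]$, i.e.\ $s_y=1$.
\end{itemize}
This is Lemma~\ref{lemma 2.2}. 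It is exactly where the clause $(y_1,y_2)\in G_S(y_1)\cap G_S(y_2)$ of condition $\mathcal G$ is needed, not in your Step 2. Without that clause intermediate values really do occur: for $S=\{1,p,p^2,q,p^2q\}$, $x_i=p$, $x_k=p^2q$, $y=q$, one gets $s_y=p$ and $r_y=p^2$.

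\textbf{How the proof then closes.} With the dichotomy $s_y\in\{1,r_y\}$, let $I=\{y\in G_S(x_k): d\mid y\}$ and $r_I=\prod_{y\in I}r_y$. Your product collapses to
\[
H(i,k)=(-1)^{|I|}\Big(\frac{x_i}{d}\Big)^{b}x_k^{a}\Big(\frac{x_k}{r_I}\Big)^{b-a}\prod_{y\in I}\frac{r_y^{b-a}-1}{r_y^{a}-1}.
\]
This is precisely Lemma~\ref{lemma 4.2}; its three cases are $I=G_S(x_k)$, $I=\emptyset$ with $d=x_k$, and $I$ a proper nonempty subset. Each term $H(i,k)/x_j^a$ is then an integer, because:
\begin{itemize}
\item $x_k/r_I=\gcd(I)\in\mathbb Z$ and $b\ge a$;
\item $(r^a-1)\mid(r^{b-a}-1)$, since $a\mid b-a$;
\item $x_j\mid x_k$.
\end{itemize}
No regrouping along chains or induction on $|S|$ is needed. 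Your uniform product formula is a cleaner route to the paper's key lemma than its case-by-case inclusion--exclusion, but only once the dichotomy is proved.

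Also tidy Step 3. Your $g_i$ carries a spurious extra factor $x_k^{-a}$. And $h_j(x_k)=\mu_S(x_j,x_k)x_j^{-a}$ is not an integer combination of powers; the $x_j^{-a}$ must be absorbed by the factor $x_k^{a}$ inside $H(i,k)$.
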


This paper is organized as follows. After introducing some necessary preliminary results,
we proceed to the core of our argument. In fact, we first recall and show in Section 2 some
results related to the arithmetic properties of gcd-closed sets that satisfy the condition
$\mathcal G$ and then present a formula for the inverse of power LCM matrices. Finally,
in Section 3, by proving a key lemma, we present the proof of Theorem \ref{theorem 1.3}.

The divisibility of matrices under consideration is invariant under relabeling the elements
of $S$. Specifically, for any permutation $\sigma$ on $\langle n\rangle$, we have
$[S^a]\mid [S^b]$ if and only if $[S^a_{\sigma}]\mid [S^b_{\sigma}]$. Therefore, without
loss of generality, we may assume throughout the rest of this paper that the set
$S=\{x_1,\cdots,x_n\}$ is ordered such that $x_1<\dots<x_n$.

\section{Preliminary lemmas}

Throughout this section, let $x_m \in S$ with $G_S(x_m)=\{x_{m_1}, \cdots , x_{m_s}\}$,
$2\le m\le n=|S|$ and $1\le s\le m-1$. Let $1\le k\le s$ and for any integers
$i_1,\cdots, i_k$ with $1 \le i_1<\cdots <i_k \le s$, we define
$$x_{m_{i_1\cdots i_k}}:=(x_{m_{i_1}},\cdots, x_{m_{i_k}}).$$
Then $x_{m_{1\cdots s}}=(x_{m_1}, \cdots, x_{m_s})$. In what follows, we recall
and show several results that are needed in the proof of our main result. For any
$j\in\langle s\rangle$, define $P_j:=\frac{x_m}{x_{m_j}}$. We begin with a known
result which is due to Zhu and Yu \cite{[ZY-PAMS2026]} and is about the arithmetic
property of the gcd-closed set $S$ which satisfies the condition $\mathcal{G}$.
\begin{lemma}\label{lemma 2.1} {\rm\cite{[ZY-PAMS2026]}}
Let $S$ be a gcd-closed set satisfying the condition
$\mathcal{G}$ and $x_m \in S$ with $G_S(x_m)=\{x_{m_1}, \cdots , x_{m_s}\}$. Let $k$ be an integer
with $1 \le k \le s$. Then for any integers
$i_1,\cdots,i_k$ with $1\le i_1<\cdots<i_k\le s$, we have
\begin{align*}
x_{m_{i_1\cdots i_k}} P_{i_1}\cdots P_{i_k}=x_m.
\end{align*}	
\end{lemma}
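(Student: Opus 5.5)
The identity can be checked prime by prime. The only input needed from the hypotheses is the condition $\mathcal G$, and in fact only the part $[y_1,y_2]=x_m$ for distinct $y_1,y_2\in G_S(x_m)$. For $k=1$ the claim reads $x_{m_{i_1}}\cdot \frac{x_m}{x_{m_{i_1}}}=x_m$, which is trivial. So assume $k\ge 2$, which forces $s=|G_S(x_m)|\ge 2$. Then $x_m$ satisfies the condition $\mathcal G$, so any two distinct $x_{m_i},x_{m_j}\in G_S(x_m)$ satisfy $[x_{m_i},x_{m_j}]=x_m$.

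\textbf{Step 1: the $P_j$ are pairwise coprime.} Fix distinct $i,j\in\langle s\rangle$ and a prime $p$, and write $v_p$ for the $p$-adic valuation. From $[x_{m_i},x_{m_j}]=x_m$ we get
$$\max\{v_p(x_{m_i}),v_p(x_{m_j})\}=v_p(x_m).$$
Hence at least one of $v_p(P_i)=v_p(x_m)-v_p(x_{m_i})$ and $v_p(P_j)=v_p(x_m)-v_p(x_{m_j})$ equals $0$. Both are nonnegative because $x_{m_i}\mid x_m$ and $x_{m_j}\mid x_m$. Therefore $(P_i,P_j)=1$ whenever $i\ne j$.

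\textbf{Step 2: compare valuations.} Fix $1\le i_1<\cdots<i_k\le s$ and a prime $p$. By Step 1, at most one index $t\in\{i_1,\dots,i_k\}$ has $v_p(P_t)>0$. This gives
$$v_p\big(P_{i_1}\cdots P_{i_k}\big)=\sum_{l=1}^k v_p(P_{i_l})=\max_{1\le l\le k} v_p(P_{i_l}).$$
On the other hand, the valuation of the gcd is
$$v_p\big(x_{m_{i_1\cdots i_k}}\big)=\min_{l}v_p(x_{m_{i_l}})=v_p(x_m)-\max_{l}v_p(P_{i_l}).$$
Adding the two displays gives
$$v_p\big(x_{m_{i_1\cdots i_k}}P_{i_1}\cdots P_{i_k}\big)=v_p(x_m)\quad\text{for every prime } p.$$
Both sides of the claimed identity are positive integers, so they are equal, which proves Lemma \ref{lemma 2.1}.

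The only real content is Step 1: recognizing that the lcm part of condition $\mathcal G$ forces the cofactors $P_j=x_m/x_{m_j}$ to be pairwise coprime. Once that is established, the sum of the valuations of the $P_{i_l}$ collapses to their maximum, and the rest is bookkeeping. Neither gcd-closedness of $S$ nor the requirement $(y_1,y_2)\in G_S(y_1)\cap G_S(y_2)$ is needed for this particular identity.
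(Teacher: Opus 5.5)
Your proof is correct and complete. The paper gives no proof of this lemma; it imports it from the work of Zhu and Yu, so there is no in-text argument to compare against.

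Your $p$-adic argument is self-contained. The lcm half of condition $\mathcal{G}$ gives $\min\{v_p(P_i),v_p(P_j)\}=0$ for $i\ne j$, so the cofactors are pairwise coprime. Hence $v_p(P_{i_1}\cdots P_{i_k})=\max_l v_p(P_{i_l})=v_p(x_m)-\min_l v_p(x_{m_{i_l}})$, which is exactly the claimed identity.

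The other natural route is induction on $k$. By the distributive law of the divisor lattice,
\[
[x_{m_{i_1\cdots i_{k-1}}},x_{m_{i_k}}]=\bigl([x_{m_{i_1}},x_{m_{i_k}}],\ldots,[x_{m_{i_{k-1}}},x_{m_{i_k}}]\bigr)=x_m .
\]
Then $(u,v)[u,v]=uv$ gives $x_{m_{i_1\cdots i_k}}P_{i_k}=x_{m_{i_1\cdots i_{k-1}}}$, and one peels off $P_{i_k},P_{i_{k-1}},\ldots$ one at a time.

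Both arguments are equally elementary and use the same hypothesis. The induction avoids prime factorizations. Yours isolates the structural reason the identity holds: the cofactors $P_j=x_m/x_{m_j}$ are pairwise coprime. Your remark is also correct that gcd-closedness and the requirement $(y_1,y_2)\in G_S(y_1)\cap G_S(y_2)$ play no role in this particular identity.
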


\begin{lemma}{\rm\cite{[Z-PMH2026]}}\label{lemma 2.2}
Let $S$ be a gcd-closed set satisfying the condition $\mathcal{G}$
and $x_l, x_m\in S$ with $G_S(x_m)=\{x_{m_1},\cdots,x_{m_s}\}$.
Let $1\le k\le s-1$. Assume that $(x_l,x_m)\mid x_{m_{1\cdots k}}$
and $(x_l,x_m) \nmid x_{m_j}$ for all integers $j$ with $k+1 \le j \le s$.
Let $1 \le i_1<\cdots < i_h \le k$ with $1\le h\le k$
and $k+1 \le j_1<\cdots<j_t \le s$ with $1\le t\le s-k$.
Then each of the following is true.	
		
{\rm (i).} $(x_l, x_{m_{i_1\cdots i_h}})=(x_l, x_m).$

{\rm (ii).} $[x_l,x_{m_{j_1\cdots j_t}}]=[x_l,x_m].$

{\rm (iii).}
$[x_{l}, x_{m_{i_1\cdots i_hj_1\cdots j_t}}]=[x_l,x_{m_{i_1\cdots i_h}}].$	
\end{lemma}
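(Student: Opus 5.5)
The plan is to work prime by prime. For each prime $p$ we compare $p$-adic valuations: since $\gcd$ and $\operatorname{lcm}$ are computed coordinatewise, $(x_l,y)=(x_l,x_m)$ is equivalent to $\min(v_p(x_l),v_p(y))=\min(v_p(x_l),v_p(x_m))$ for all $p$, and similarly for $[\,\cdot\,,\cdot\,]$ with $\max$.

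First we extract the local structure of $G_S(x_m)$ from Lemma \ref{lemma 2.1}. Applying it with $k=2$ gives $x_{m_{ij}}P_iP_j=x_m$, that is,
\[
(x_{m_i},x_{m_j})=\frac{x_{m_i}x_{m_j}}{x_m}.
\]
Comparing this with $(x_{m_i},x_{m_j})[x_{m_i},x_{m_j}]=x_{m_i}x_{m_j}$ recovers $[x_{m_i},x_{m_j}]=x_m$. Writing $\alpha=v_p(x_m)$ and $\beta_j=v_p(x_{m_j})\le\alpha$, the general case of Lemma \ref{lemma 2.1} reads
\[
\min_{r}\beta_{i_r}=\alpha-\sum_r(\alpha-\beta_{i_r}).
\]
Applied to any pair, this forces at most one of $\beta_i,\beta_j$ to be strictly less than $\alpha$. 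So for each prime $p$ there is at most one index $j(p)$ with $v_p(x_{m_j})<\alpha$, and consequently
\[
v_p(x_{m_{i_1\cdots i_h}})=\alpha \quad\text{unless } j(p)\in\{i_1,\dots,i_h\},
\]
in which case it equals $\beta_{j(p)}$.

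Put $d=(x_l,x_m)$ and $\gamma=v_p(x_l)$.

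For (i), it suffices to check $\min(\gamma,\beta_{j(p)})=\min(\gamma,\alpha)$ whenever $j(p)\in\{i_1,\dots,i_h\}\subseteq\langle k\rangle$. The hypothesis $d\mid x_{m_{1\cdots k}}$ gives $\min(\gamma,\alpha)\le\beta_{j(p)}$. If $\gamma\le\alpha$ the minimum is already $\gamma$ on both sides. If $\gamma>\alpha$, then $\alpha\le\beta_{j(p)}\le\alpha$ forces equality. So (i) holds.

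For (ii), we need $\max(\gamma,\beta_{j(p)})=\max(\gamma,\alpha)$ when $j(p)\in\{j_1,\dots,j_t\}$, which amounts to $\gamma\ge\alpha$ at such primes. This is the main obstacle: the hypothesis $d\nmid x_{m_j}$ for $j>k$ only provides some prime at which $\min(\gamma,\alpha)>\beta_j$. My first attempt is to exploit gcd-closedness together with the greatest-type property. The element $(x_l,x_m)\in S$ divides $x_m$. If $d\ne x_m$, then $d$ divides some $x_{m_{j'}}$, since every proper divisor of $x_m$ in $S$ lies below some greatest-type divisor. Let $T=\{j: d\mid x_{m_j}\}$; by hypothesis $T\subseteq\langle k\rangle$. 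If some prime $p$ had $j(p)=j>k$ and $\gamma<\alpha$, then $\beta_j<\alpha$ and $\beta_{j'}=\alpha$ for all $j'\ne j$. Now take the element $e=(x_l,x_{m_j})$, or better the gcd of $x_m$ with a suitable element, and use $x_m=[x_{m_i},x_{m_j}]$ to derive a contradiction: I expect to show that $d$ would then divide $x_{m_j}$ at every prime, contradicting $j>k$. This needs careful bookkeeping, or possibly an induction on the position of $x_m$ within the poset $S$. If this fails, I would instead rely on the stronger standing structure that the condition $(x_{m_i},x_{m_j})\in G_S(x_{m_i})$ provides.

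For (iii), combine the two. At a prime with $j(p)$ among the $i$'s, both sides equal $\max(\gamma,\beta_{j(p)})$. At a prime with $j(p)$ among the $j$'s, the left side is $\max(\gamma,\beta_{j(p)})$, which by (ii) equals $\max(\gamma,\alpha)$, and this is the right side since $v_p(x_{m_{i_1\cdots i_h}})=\alpha$. At all other primes both sides equal $\max(\gamma,\alpha)$.
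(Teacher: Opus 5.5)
The paper imports this lemma from its cited source without proof, so your argument has to stand on its own. Three parts of it are correct: part (i), the local picture of $G_S(x_m)$ (at each prime at most one $x_{m_j}$ has $v_p(x_{m_j})<v_p(x_m)$), and the deduction of (iii) from the primewise form of (ii).

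However, (ii) is not proved. What you wrote is a plan, and the route you sketch cannot succeed. It works only at the level of $x_m$: Lemma~\ref{lemma 2.1}, $[x_{m_i},x_{m_j}]=x_m$, gcd-closedness, and the element $(x_l,x_{m_j})$. All of that data also holds in a set where (ii) is false. Take distinct primes $p,q,r$ and $S=\{1,p,r,pq,pqr\}$. This set is gcd closed, $G_S(pqr)=\{pq,r\}$, $[pq,r]=pqr$, and the identity of Lemma~\ref{lemma 2.1} holds at $x_m=pqr$. Yet for $x_l=p$ one has $(x_l,x_m)=p\mid pq$ and $p\nmid r$, while $[p,r]=pr\ne pqr=[p,pqr]$. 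The only thing that fails here is the second half of condition $\mathcal G$: $(pq,r)=1\notin G_S(pq)$, because $1\mid p\mid pq$. So that half must be used. Since $(x_l,x_m)$ may lie several levels below $x_{m_i}$, it must be used recursively, not just at $x_m$.

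The missing ingredient is the following claim, proved by induction on $x\in S$. \emph{For every $y\in G_S(x)$ and every $d\in S$ with $d\mid x$ and $d\nmid y$, one has $v_p(d)=v_p(x)$ at every prime $p$ with $v_p(y)<v_p(x)$.}

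For the minimal element of $S$, $G_S(x)$ is empty, so the claim is vacuous. In general, if $d=x$ there is nothing to prove. Otherwise take $y'\in S$ maximal under divisibility with $d\mid y'\mid x$ and $y'\ne x$. Then $y'\in G_S(x)$, and $y'\ne y$ because $d\nmid y$. Condition $\mathcal G$ gives $[y,y']=x$ and $g:=(y,y')\in G_S(y')$. At each prime $p$ with $v_p(y)<v_p(x)$, this forces $v_p(y')=v_p(x)$ and $v_p(g)=v_p(y)<v_p(y')$. Since $g\mid y$, also $d\nmid g$. The induction hypothesis for $y'<x$ therefore yields $v_p(d)=v_p(y')=v_p(x)$.

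Now apply the claim with $x=x_m$, $y=x_{m_j}$ ($j>k$) and $d=(x_l,x_m)\in S$. It gives $\min(\gamma,\alpha)=\alpha$, that is $\gamma\ge\alpha$, at every prime where $x_{m_j}$ is deficient. This is exactly the primewise statement your proofs of (ii) and (iii) need. Alternatively, you can pass from a single $j$ to $x_{m_{j_1\cdots j_t}}$ using $[x_l,(u,v)]=([x_l,u],[x_l,v])$.
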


\begin{lemma} {\rm \cite[Theorem 3]{[BL-LMA1993]}} \label{lemma 2.3}
If $S$ is gcd-closed set of $n$ positive integers and $(f(S))$ is
nonsingular, then
$$((f(S))^{-1})_{ij}:={\underset{x_i|x_k\atop x_j|x_k}
{\sum}}\frac{c_{ik}c_{jk}}{\alpha_f(x_k)}$$
with
\begin{align}\label{eq2.1}
\alpha_f(x_k):={\underset{d|x_k\atop d\nmid x_t, x_t<x_k}{\sum}}(f*\mu)(d)
\end{align}
and
\begin{align}\label{eq2.2}
c_{ij}:=\sum _{dx_i|x_j\atop dx_i\nmid x_t, x_t<x_j}\mu(d).
\end{align}
\end{lemma}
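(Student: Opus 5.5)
The plan is to factor $(f(S))$ as $E\Lambda E^{T}$, where $E$ is a unitriangular $0$--$1$ matrix recording divisibility in $S$ and $\Lambda$ is diagonal with entries $\alpha_f(x_k)$. We then invert each factor explicitly and show that the entries of $E^{-1}$ are exactly the numbers $c_{ij}$ of (\ref{eq2.2}).

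\textbf{Step 1: partition the divisors.} Write $g:=f*\mu$, so that $f(x)=\sum_{d\mid x}g(d)$. For each $k\in\langle n\rangle$ let
$$D_k:=\{d\ge 1: d\mid x_k,\ d\nmid x_t \text{ for all } x_t<x_k\}.$$
By (\ref{eq2.1}), $\alpha_f(x_k)=\sum_{d\in D_k}g(d)$. Every $d$ dividing some element of $S$ lies in exactly one $D_k$, namely the one indexed by the smallest element of $S$ that $d$ divides.

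\textbf{Step 2: the key fact.} We show that if $d\in D_k$ and $d\mid x_i$, then $x_k\mid x_i$. Indeed, $(x_k,x_i)\in S$ because $S$ is gcd closed. Moreover $d\mid (x_k,x_i)$ and $(x_k,x_i)\le x_k$. Minimality in the definition of $D_k$ then forces $(x_k,x_i)=x_k$.

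\textbf{Step 3: the factorization.} Since $d\mid (x_i,x_j)$ if and only if $d\mid x_i$ and $d\mid x_j$, Step 2 gives
$$f((x_i,x_j))=\sum_{d\mid (x_i,x_j)} g(d)=\sum_{x_k\mid x_i,\ x_k\mid x_j}\ \sum_{d\in D_k}g(d)=\sum_{x_k\mid x_i,\ x_k\mid x_j}\alpha_f(x_k).$$
Thus $(f(S))=E\Lambda E^{T}$, where $E_{ik}=1$ if $x_k\mid x_i$ and $0$ otherwise, and $\Lambda={\rm diag}(\alpha_f(x_1),\dots,\alpha_f(x_n))$. With $x_1<\cdots<x_n$, the matrix $E$ is lower unitriangular, so $\det (f(S))=\prod_k\alpha_f(x_k)$. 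Nonsingularity therefore gives $\alpha_f(x_k)\neq 0$ for every $k$, and
$$(f(S))^{-1}=(E^{-1})^{T}\Lambda^{-1}E^{-1}.$$

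\textbf{Step 4: inverting $E$.} We claim $(E^{-1})_{kj}=c_{jk}$. By definition (\ref{eq2.2}), $c_{jk}=\sum_{d:\ dx_j\in D_k}\mu(d)$, which vanishes unless $x_j\mid x_k$. It suffices to check that
$$\sum_{k} E_{ik}c_{jk}=\sum_{x_k\mid x_i}\ \sum_{d:\ dx_j\in D_k}\mu(d)=\delta_{ij}.$$
By Steps 1 and 2, each $d$ with $dx_j\mid x_i$ has $dx_j$ lying in exactly one $D_k$, and that $k$ satisfies $x_k\mid x_i$. Conversely, if $dx_j\in D_k$ with $x_k\mid x_i$, then $dx_j\mid x_i$. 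So the double sum equals $\sum_{d\mid x_i/x_j}\mu(d)$ when $x_j\mid x_i$, and $0$ otherwise. This is $\delta_{ij}$.

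\textbf{Step 5: conclusion.} Substituting Step 4 into Step 3 gives
$$((f(S))^{-1})_{ij}=\sum_k\frac{c_{ik}c_{jk}}{\alpha_f(x_k)}.$$
The summand is nonzero only when $x_i\mid x_k$ and $x_j\mid x_k$, which is the stated formula.

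The main obstacle is Step 4: one must see that the "$d\nmid x_t,\ x_t<x_j$" condition in $c_{ij}$ makes the sets $\{d: dx_j\in D_k\}$ partition exactly the divisors of $x_i/x_j$. This rests entirely on Step 2, so the gcd-closedness of $S$ is used precisely there.
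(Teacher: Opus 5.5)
Your argument is correct and complete. The partition of divisors into the sets $D_k$ gives the factorization $(f(S))=E\Lambda E^{T}$, gcd-closedness enters exactly in Step 2, and the M\"obius computation $\sum_{d\mid x_i/x_j}\mu(d)=\delta_{ij}$ shows $(E^{-1})_{kj}=c_{jk}$.

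The paper does not prove this lemma itself; it cites Bourque and Ligh, and your route (factor $(f(S))=E\Lambda E^{T}$, then invert the unitriangular divisibility matrix $E$ by M\"obius inversion) is the standard argument behind that cited result.
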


\begin{lemma}{\rm\cite{[H-JA2004]}}\label{lemma 2.4}
Let $S$ be a gcd-closed set and let $\alpha_{f}(x_k)$
be given as in \eqref{eq2.1}. Then
$$
\alpha_{f}(x_k)=\sum\limits_{J\subseteq G_S(x_k)}(-1)^{|J|}
f\big(\gcd (J\cup \{x_k\})\big).
$$
\end{lemma}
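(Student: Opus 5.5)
The plan is to prove Lemma \ref{lemma 2.4} in two steps. First I reduce the summation condition in \eqref{eq2.1} to a condition involving only the greatest-type divisors of $x_k$. Then I apply inclusion--exclusion together with Möbius inversion.

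\textbf{Step 1 (reduction to $G_S(x_k)$).} I claim that for a divisor $d$ of $x_k$,
$$
\big(d\nmid x_t \text{ for all } x_t\in S \text{ with } x_t<x_k\big)\iff \big(d\nmid y \text{ for all } y\in G_S(x_k)\big).
$$
The direction ``$\Rightarrow$'' is immediate, since every $y\in G_S(x_k)$ lies in $S$ and satisfies $y<x_k$.

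For ``$\Leftarrow$'', suppose $d\mid x_t$ for some $x_t\in S$ with $x_t<x_k$. Since $S$ is gcd closed, $z:=(x_t,x_k)\in S$. Moreover $d\mid z$, $z\mid x_k$ and $z<x_k$. Among the elements of $S$ that are proper divisors of $x_k$ and multiples of $z$, choose a maximal one $y$ with respect to divisibility; this is possible because $S$ is finite. If some $w\in S$ satisfies $y\mid w\mid x_k$ with $w\ne x_k$, then $w$ is again a proper divisor of $x_k$ that is a multiple of $z$, so maximality forces $w=y$. Hence $y\in G_S(x_k)$ and $d\mid z\mid y$, which proves the claim.

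\textbf{Step 2 (inclusion--exclusion).} For $d\mid x_k$, the indicator of the event ``$d\nmid y$ for all $y\in G_S(x_k)$'' equals
$$
\sum_{J\subseteq G_S(x_k),\ d\mid y\ \forall y\in J}(-1)^{|J|}.
$$
This holds because the sum is $(1-1)^{r}$, where $r$ is the number of $y\in G_S(x_k)$ divisible by $d$; it equals $1$ if $r=0$ and $0$ otherwise.

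Also, $d$ divides $x_k$ and every $y\in J$ if and only if $d\mid \gcd(J\cup\{x_k\})$. Substituting the indicator into \eqref{eq2.1} and interchanging the finite sums gives
$$
\alpha_f(x_k)=\sum_{J\subseteq G_S(x_k)}(-1)^{|J|}\sum_{d\mid \gcd(J\cup\{x_k\})}(f*\mu)(d).
$$

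\textbf{Step 3 (Möbius inversion).} Since $\sum_{d\mid N}(f*\mu)(d)=(f*\mu*\mathbf 1)(N)=f(N)$, the inner sum equals $f(\gcd(J\cup\{x_k\}))$, which is the stated formula. The empty set $J=\emptyset$ contributes $f(x_k)$, and the case $G_S(x_k)=\emptyset$ (when $x_k$ is the minimum of $S$) is covered automatically.

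The only step that uses the hypothesis that $S$ is gcd closed is Step 1, and that is where the care is needed. Specifically, one must pass from an arbitrary smaller $x_t$ to the element $(x_t,x_k)$, which lies in $S$ and divides $x_k$. Without gcd-closure, $d$ could divide some smaller $x_t$ that is not a divisor of $x_k$, and the reduction to $G_S(x_k)$ would fail.
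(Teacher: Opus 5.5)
The paper does not prove this lemma; it quotes it from Hong's 2004 paper. Your argument is a correct and complete proof along the standard lines. Gcd-closedness lets you replace the condition ``$d\nmid x_t$ for all $x_t\in S$ with $x_t<x_k$'' by ``$d\nmid y$ for all $y\in G_S(x_k)$'', and your choice of a divisibility-maximal proper divisor of $x_k$ in $S$ lying above $(x_t,x_k)$ correctly produces the required greatest-type divisor. After that, inclusion--exclusion over $G_S(x_k)$ together with $\sum_{d\mid N}(f*\mu)(d)=f(N)$ gives the formula. The degenerate case $G_S(x_k)=\emptyset$ is also handled.
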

		
\begin{lemma}{\rm\cite{[BL-LMA1993]}}\label{lemma 2.5}
If $S$ is gcd closed, then
\begin{align*}
\det(f(S))=\prod\limits_{k=1}^n\alpha_{f}(x_k)
\end{align*}
with $\alpha_{f}(x_k)$
being given as in \eqref{eq2.1}.
\end{lemma}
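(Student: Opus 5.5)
The plan is to factor $(f(S))$ as $E\,\Delta\,E^{T}$, where $E$ is a unipotent lower-triangular $0$--$1$ matrix and $\Delta$ is diagonal with entries $\alpha_f(x_k)$. The determinant formula is then immediate. The main tool is M\"obius inversion: put $g:=f*\mu$, so that $f(x)=\sum_{d\mid x}g(d)$ for every positive integer $x$. With this, the $(i,j)$-entry of $(f(S))$ becomes
\begin{align*}
f((x_i,x_j))=\sum_{d\mid x_i,\ d\mid x_j} g(d).
\end{align*}

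The key combinatorial step is to partition the divisors of elements of $S$ according to the sets
\begin{align*}
D_k:=\{d\ge 1: d\mid x_k,\ d\nmid x_t \text{ for all } x_t<x_k\},
\end{align*}
so that $\alpha_f(x_k)=\sum_{d\in D_k}g(d)$ by \eqref{eq2.1}.

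First, the sets $D_k$ are pairwise disjoint. If $d\in D_k\cap D_l$ with $x_l<x_k$, then $d\mid x_l$, which contradicts $d\in D_k$.

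Second, I claim that for $d\in D_k$ and any $x_i\in S$ we have $d\mid x_i$ if and only if $x_k\mid x_i$. The direction ``if'' is trivial. For ``only if'', suppose $d\mid x_i$. Then $d$ divides $(x_k,x_i)$, and $(x_k,x_i)\in S$ because $S$ is gcd closed. Also $(x_k,x_i)\le x_k$, so minimality in the definition of $D_k$ forces $(x_k,x_i)=x_k$, that is, $x_k\mid x_i$. This is the only place where gcd-closedness is used, and it is the one step I expect to need care.

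Third, every divisor $d$ of some $x_i\in S$ lies in some $D_k$. Take $x_k$ to be the least element of $S$ divisible by $d$; then $d\in D_k$ by definition.

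Combining these three facts, the divisors $d$ of $(x_i,x_j)$ are exactly the elements of $\bigcup_k D_k$ with $x_k\mid x_i$ and $x_k\mid x_j$. Hence
\begin{align*}
f((x_i,x_j))=\sum_{k=1}^n e_{ik}\,\alpha_f(x_k)\,e_{jk},
\qquad
e_{ik}:=\begin{cases}1,& x_k\mid x_i,\\ 0,&\text{otherwise},\end{cases}
\end{align*}
which says $(f(S))=E\,{\rm diag}(\alpha_f(x_1),\dots,\alpha_f(x_n))\,E^{T}$ with $E=(e_{ik})$.

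Since $x_1<\cdots<x_n$, the condition $x_k\mid x_i$ forces $k\le i$, so $E$ is lower triangular with ones on the diagonal and $\det E=1$. Taking determinants gives $\det(f(S))=\prod_{k=1}^n\alpha_f(x_k)$, which proves Lemma~\ref{lemma 2.5}.
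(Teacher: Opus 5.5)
Your proof is correct. The partition of divisors into the sets $D_k$, the gcd-closedness step giving $d\mid x_i \iff x_k\mid x_i$ for $d\in D_k$, and the resulting factorization $(f(S))=E\,{\rm diag}(\alpha_f(x_1),\dots,\alpha_f(x_n))\,E^{T}$ with $E$ unipotent lower triangular are all sound.

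The paper gives no proof of its own and only cites Bourque and Ligh, whose argument rests on this same factorization. That factorization also yields the inverse formula of Lemma~\ref{lemma 2.3}, where one can check that $(c_{ij})=(E^{T})^{-1}$.
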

\begin{lemma}{\rm\cite{[ZY-PAMS2026]}}\label{lemma 2.6}
Let $S$ be a gcd-closed set satisfying the condition
$\mathcal G$ and let $x_m\in S$ be such that
$\{x_{m_1},\cdots,x_{m_s}\}\subseteq G_S(x_m)$.
For any real number $e$, define
$$
\beta_{\xi_e}(s,x_m)
:=x_m^e+\sum\limits_{t=1}^s(-1)^t\sum
\limits_{1\le i_1<\cdots<i_t\le s}x^e_{m_{i_1\cdots i_t}}.
$$
Then
\begin{align*}
\beta_{\xi_e}(s,x_m)=x^e_{m_{1\cdots s}}
\prod_{j=1}^s\big(\Big(\frac{x_m}{x_{m_j}}\Big)^e-1\big).
\end{align*}
\end{lemma}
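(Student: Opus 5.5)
The plan is to reduce the identity to a product expansion by expressing each $x_{m_{i_1\cdots i_t}}$ through Lemma \ref{lemma 2.1}. Here $\{x_{m_1},\cdots,x_{m_s}\}$ is only assumed to be a subset of $G_S(x_m)$. So I first enlarge it to the full set $G_S(x_m)$, relabelling so that $x_{m_1},\cdots,x_{m_s}$ are among its elements. Then I apply Lemma \ref{lemma 2.1} with the chosen indices $i_1<\cdots<i_t$, which all lie among these $s$ indices. This is legitimate because Lemma \ref{lemma 2.1} holds for \emph{any} subfamily of indices of $G_S(x_m)$. It gives
\begin{align*}
x_{m_{i_1\cdots i_t}}=\frac{x_m}{P_{i_1}\cdots P_{i_t}}\quad \text{for every nonempty } \{i_1,\cdots,i_t\}\subseteq\langle s\rangle .
\end{align*}

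Next I interpret the empty index set as contributing $x_m$, which is the term $x_m^e$ in $\beta_{\xi_e}(s,x_m)$. Raising the formula above to the real power $e$ (all quantities are positive) turns the definition into a signed sum over all subsets $I\subseteq\langle s\rangle$:
\begin{align*}
\beta_{\xi_e}(s,x_m)=\sum_{I\subseteq\langle s\rangle}(-1)^{|I|}x_m^e\prod_{i\in I}P_i^{-e}
=x_m^e\prod_{j=1}^s\big(1-P_j^{-e}\big),
\end{align*}
where the second equality is the standard expansion of a product of binomials.

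Finally, I apply Lemma \ref{lemma 2.1} once more with the full index set $\{1,\cdots,s\}$. This gives $x_{m_{1\cdots s}}^e=x_m^e\prod_{j=1}^sP_j^{-e}$, and therefore
\begin{align*}
x^e_{m_{1\cdots s}}\prod_{j=1}^s\big(P_j^e-1\big)=x_m^e\prod_{j=1}^s P_j^{-e}\big(P_j^e-1\big)=x_m^e\prod_{j=1}^s\big(1-P_j^{-e}\big).
\end{align*}
This matches the expression obtained above, which proves the identity.

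The only point needing care is the first step. One has to check that Lemma \ref{lemma 2.1}, which is phrased for the full set $G_S(x_m)$, really applies to index subsets drawn from an arbitrary subset of $G_S(x_m)$. One also has to check that $P_j=x_m/x_{m_j}$ is unchanged by the relabelling. Both follow at once, since the lemma's conclusion concerns an arbitrary choice of indices and $P_j$ depends only on $x_{m_j}$. Everything else is routine algebra.
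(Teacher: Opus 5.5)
Your proof is correct. Note that the paper gives no proof of this lemma; it imports it from Zhu and Yu \cite{[ZY-PAMS2026]}, so there is no in-paper argument to compare against. Your route is to substitute $x_{m_{i_1\cdots i_t}}=x_m/(P_{i_1}\cdots P_{i_t})$ from Lemma~\ref{lemma 2.1}, expand $x_m^e\prod_{j=1}^s(1-P_j^{-e})$ over subsets, and apply Lemma~\ref{lemma 2.1} once more to the full index set. This is the same manipulation the paper performs when it evaluates $E_2$ in the proof of Lemma~\ref{lemma 4.2}.

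Your handling of the case where $\{x_{m_1},\dots,x_{m_s}\}$ is a proper subset of $G_S(x_m)$ is also sound. That case is genuinely needed there, for $\beta_{\xi_{b-a}}(k,x_m)$ and $\beta_{\xi_{-a}}(s-k,x_m)$ with the indices $k+1,\dots,s$.

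If you want the argument self-contained, you can also prove Lemma~\ref{lemma 2.1} directly, using only the half $[x_{m_i},x_{m_j}]=x_m$ of condition $\mathcal G$. That identity forces the $P_j$ to be pairwise coprime. Comparing $p$-adic valuations then gives $(x_m/P_{i_1},\dots,x_m/P_{i_t})=x_m/(P_{i_1}\cdots P_{i_t})$.
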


For any real number $e$, the reciprocal arithmetic function $\frac{1}{\xi_e}$ is
defined for any positive integer $x$ by $\frac{1}{\xi_e}(x):=\frac{1}{x^e}$.
In fact, one has $\frac{1}{\xi_e}=\xi_{-e}$.

\begin{lemma}\label{lemma 2.7}
Let $S$ be a gcd-closed set satisfying the condition
$\mathcal G$ and $x_m\in S$ with
$G_S(x_m)=\{x_{m_1},\cdots,x_{m_s}\}$.
Then
$$
\alpha_{\frac{1}{\xi_a}}(x_m)
=x_{m_{1\cdots s}}^{-a} \prod_{j=1}^s\Big(\Big(\frac{x_m}{x_{m_j}}\Big)^{-a}-1\Big).
$$
\end{lemma}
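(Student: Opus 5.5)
The plan is to reduce to the two earlier results, Lemma \ref{lemma 2.4} and Lemma \ref{lemma 2.6}, applied with the arithmetic function $f=\frac{1}{\xi_a}=\xi_{-a}$.

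First, take $f=\xi_{-a}$ and $x_k=x_m$ in Lemma \ref{lemma 2.4}. This gives
$$
\alpha_{\frac{1}{\xi_a}}(x_m)=\sum_{J\subseteq G_S(x_m)}(-1)^{|J|}\big(\gcd(J\cup\{x_m\})\big)^{-a}.
$$
Next, group the terms by the size $t=|J|$.

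\emph{The empty set.} $J=\emptyset$ contributes $x_m^{-a}$.

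\emph{Nonempty subsets.} Each nonempty $J=\{x_{m_{i_1}},\cdots,x_{m_{i_t}}\}$ with $1\le i_1<\cdots<i_t\le s$ consists of divisors of $x_m$. Hence $\gcd(J\cup\{x_m\})=(x_{m_{i_1}},\cdots,x_{m_{i_t}})=x_{m_{i_1\cdots i_t}}$, and $J$ contributes $(-1)^t x_{m_{i_1\cdots i_t}}^{-a}$.

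Summing these contributions gives exactly
$$
\alpha_{\frac{1}{\xi_a}}(x_m)=x_m^{-a}+\sum_{t=1}^s(-1)^t\sum_{1\le i_1<\cdots<i_t\le s}x^{-a}_{m_{i_1\cdots i_t}}=\beta_{\xi_{-a}}(s,x_m),
$$
where $\beta$ is as defined in Lemma \ref{lemma 2.6}.

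Finally, apply Lemma \ref{lemma 2.6} with the real exponent $e=-a$. That lemma is stated for an arbitrary real $e$ and for the full set $G_S(x_m)=\{x_{m_1},\cdots,x_{m_s}\}$, so this choice is allowed. It yields
$$
\beta_{\xi_{-a}}(s,x_m)=x_{m_{1\cdots s}}^{-a}\prod_{j=1}^s\Big(\Big(\frac{x_m}{x_{m_j}}\Big)^{-a}-1\Big),
$$
which is the claimed formula.

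I expect no real obstacle. The only points to check are that the indexing of subsets in Lemma \ref{lemma 2.4} matches the indexing in the definition of $\beta$, and that Lemma \ref{lemma 2.6} really holds for negative exponents. If that second point were in doubt, Lemma \ref{lemma 2.1} gives an independent route: it says $x_{m_{i_1\cdots i_t}}=x_{m_{1\cdots s}}\prod_{j\notin\{i_1,\cdots,i_t\}}P_j$. Substituting this into each term and factoring out $x_{m_{1\cdots s}}^{-a}$, inclusion–exclusion then expands $\prod_j(P_j^{-a}-1)$ directly.
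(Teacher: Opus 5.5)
Your proposal is correct and matches the paper's proof: specialize Lemma \ref{lemma 2.4} to $f=\xi_{-a}$ to get $\alpha_{\frac{1}{\xi_a}}(x_m)=\beta_{\xi_{-a}}(s,x_m)$, then apply Lemma \ref{lemma 2.6} with $e=-a$. Your explicit check that $\gcd(J\cup\{x_m\})=x_{m_{i_1\cdots i_t}}$ fills in the one step the paper leaves implicit, and your fallback argument through Lemma \ref{lemma 2.1} and inclusion--exclusion is also valid.
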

\begin{proof}
Setting $f=\frac{1}{\xi_a}$ in Lemma \ref{lemma 2.4}, by Lemmas
\ref{lemma 2.4} and \ref{lemma 2.6}, we get that
\begin{align*}
\alpha_{\frac{1}{\xi_a}}(x_m)=\beta_{\xi_{-a}}(s,x_m)
=x^{-a}_{m_{1\cdots s}}\prod_{j=1}^s\Big(\Big(\frac{x_m}{x_{m_j}}\Big)^{-a}-1\Big)
\end{align*}
as required. So Lemma \ref{lemma 2.7} is proved.
\end{proof}

For any real number $e$ and for any gcd-closed set $S$ satisfying the condition $\mathcal{G}$,
Zhu and Yu \cite{[ZY-PAMS2026]} proved that the $e$th power LCM matrix $[S^e]$ is nonsingular
when $e\ne 0$ which confirmed another conjecture of Hong raised in \cite{[H-BAMS25]}. In the
following, we present the explicit formulas for the inverses of the $e$th power GCD matrix
$(S^e)$ and the $e$th power LCM matrix $[S^e]$.

\begin{lemma}\label{lemma 2.8}
Let $S$ be a gcd-closed set of $n$ positive integers and let $S$ satisfy the condition
$\mathcal G$. Then for any nonzero real number $e$, the $e$th power GCD matrix $(S^e)$
and the $e$th power LCM matrix $[S^e]$ are both nonsingular, and for any integers $i$
and $j$ with $1\le i,j\le n$, we have
$$
((S^e)^{-1})_{ij}:={\underset{x_i|x_k\atop x_j|x_k}{\sum}}
\frac{c_{ik}c_{jk}}{\alpha_{\xi_e}(x_k)}
$$
and
$$
([S^e]^{-1})_{ij}:=\frac{1}{x_i^e x_j^e}{\underset{x_i|x_k\atop x_j|x_k}{\sum}}
\frac{c_{ik}c_{jk}}{\alpha_{\frac{1}{\xi_e}}(x_k)},
$$
where $c_{ik}$ and $c_{jk}$ are defined as in {\rm (\ref{eq2.2})}.
\end{lemma}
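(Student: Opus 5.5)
We prove Lemma \ref{lemma 2.8} by reducing both statements to Lemma \ref{lemma 2.3} for suitable arithmetic functions $f$, after establishing that the relevant $\alpha_f(x_k)$ never vanish.

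\emph{Nonsingularity.} Since $S$ is gcd closed, Lemma \ref{lemma 2.5} gives $\det(S^e)=\prod_{k}\alpha_{\xi_e}(x_k)$. Because $S$ satisfies the condition $\mathcal G$, Lemmas \ref{lemma 2.4} and \ref{lemma 2.6} applied with $G_S(x_k)=\{x_{k_1},\dots,x_{k_s}\}$ give
$$\alpha_{\xi_e}(x_k)=x_{k_{1\cdots s}}^e\prod_{j=1}^s\Big(\Big(\frac{x_k}{x_{k_j}}\Big)^e-1\Big).$$
If $G_S(x_k)=\emptyset$, i.e.\ $x_k=x_1$, this reduces to $\alpha_{\xi_e}(x_1)=x_1^e$. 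Each factor is nonzero, since $x_k/x_{k_j}>1$ and $e\neq0$. Hence $(S^e)$ is nonsingular. The same argument with $-e$, as in Lemma \ref{lemma 2.7}, shows that every $\alpha_{\frac{1}{\xi_e}}(x_k)$ is nonzero, so $(\frac{1}{\xi_e}(S))=(S^{-e})$ is nonsingular.

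\emph{Inverse of $(S^e)$.} Applying Lemma \ref{lemma 2.3} with $f=\xi_e$ yields the first formula directly.

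\emph{Inverse of $[S^e]$.} The key step is the identity $[x_i,x_j]=x_ix_j/(x_i,x_j)$, which gives the factorization
$$[S^e]=D\,(S^{-e})\,D,\qquad D={\rm diag}(x_1^e,\dots,x_n^e).$$
This shows at once that $\det[S^e]=\prod_k x_k^{2e}\alpha_{\frac1{\xi_e}}(x_k)\neq0$, and that $[S^e]^{-1}=D^{-1}(S^{-e})^{-1}D^{-1}$. Applying Lemma \ref{lemma 2.3} with $f=\frac1{\xi_e}$ to the middle factor and then multiplying the $(i,j)$ entry by $x_i^{-e}x_j^{-e}$ gives the stated formula; note that the $c_{ik}$ depend only on $S$, not on $f$.

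There is no real obstacle. The only point needing care is the nonvanishing of the $\alpha$'s for every $x_k$, including the base element $x_1$ and elements with $|G_S(x_k)|=1$. In those cases the product formula reduces trivially (for $|G_S(x_k)|=1$ the gcd over the single divisor is that divisor itself), so the validity of Lemma \ref{lemma 2.6} in the degenerate cases is immediate.
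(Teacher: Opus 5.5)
Your proposal is correct and follows essentially the same route as the paper: the factorization $[S^e]=D\,(S^{-e})\,D$, nonvanishing of every $\alpha$ via Lemmas \ref{lemma 2.4}--\ref{lemma 2.6} with $x_1$ handled separately, and Lemma \ref{lemma 2.3} applied with $f=\xi_e$ and $f=\frac{1}{\xi_e}$. The only cosmetic difference is the order: you establish nonvanishing for exponent $e$ first and then $-e$, whereas the paper works with $-e$ (via Lemma \ref{lemma 2.7}) and then replaces $-e$ by $e$.
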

\begin{proof}
Since
$${[x_i,x_j]}^e=\frac{x_i^e x_j^e}{{(x_i,x_j)}^e},$$
we have
\begin{align}\label{eq2.4}
[S^e]=&{\rm diag}(x_1^e,\cdots,x_n^e)\cdot
\Big(\frac{1}{\xi_e}(x_i,x_j)\Big)\cdot{\rm diag}(x_1^e,\cdots,x_n^e)\notag\\
=&:{\rm diag}(x_1^e,\cdots,x_n^e)\cdot B\cdot {\rm diag}(x_1^e,\cdots,x_n^e),
\end{align}
where $B:=\big(\frac{1}{\xi_e}(x_i,x_j)\big)=(\xi_{-e}(S))$ is the
$(-e)$th power GCD matrix defined on $S$. We take the determinants
on both sides of \eqref{eq2.4} and obtain that
\begin{align}\label{eq2.5}
\det[S^e]=\det B\cdot\prod\limits_{k=1}^nx_k^{2e}.
\end{align}
Since the two diagonal matrices in \eqref{eq2.4} are nonsingular, the
nonsingularity of the $e$th power LCM matrix $[S^e]$ is equivalent
to that of $B$. Thus we first prove that $B$ is nonsingular.

By Lemma \ref{lemma 2.5}, the determinant of $B$ is equal to
\begin{align}\label{I}
\det B=\prod\limits_{k=1}^n\alpha_{\frac{1}{\xi_e}}(x_k).
\end{align}
Hence it suffices to show that each factor $\alpha_{\frac{1}{\xi_e}}(x_k)$ is nonzero.
First, from Lemma \ref{lemma 2.4} we have
\begin{align}\label{II}
\alpha_{\frac{1}{\xi_e}}(x_1)=x_1^{-e}\ne0.
\end{align}
Next, for any $2\le m\le n=|S|$, let $G_S(x_m)=\{x_{m_1},\cdots, x_{m_s}\}$ with $s=|G_S(x_m)|\ge 1$.
For each $x_{m_j}\in G_S(x_m)$ ($1\le j\le s$), since $-e\ne 0$ and $x_m\ne x_{m_j}$, we have
$$
\Big(\frac{x_m}{x_{m_j}}\Big)^{-e}-1\neq0.
$$
Applying Lemma \ref{lemma 2.7} then yields that
\begin{align}\label{III}
\alpha_{\frac{1}{\xi_e}}(x_m)
=x_{m_{1\cdots s}}^{-e} \prod_{j=1}^s\Big(\Big(\frac{x_m}{x_{m_j}}\Big)^{-e}-1\Big)\ne 0.
\end{align}
Combining \eqref{I} to \eqref{III}, we conclude that $\det B\ne 0$, so $B$ is nonsingular.
Replacing $-e$ by $e$, this gives that the $e$th power GCD matrix $(S^e)$ is nonsingular
since $e\ne 0$. Moreover, together with \eqref{eq2.5}, this implies that $[S^e]$ is nonsingular.

Furthermore, we apply Lemma \ref{lemma 2.3} to the function $f=\xi_e$ and get the formula
for $(S^e)^{-1}$. Letting $f=\frac{1}{\xi_e}$ in Lemma \ref{lemma 2.3} gives the entrywise
formula for $B^{-1}$ as follows:
\begin{align}\label{eq2.6}
\big(B^{-1}\big)_{ij}
={\underset{x_i|x_k\atop x_j|x_k}
{\sum}}\frac{c_{i k}c_{j k}}{\alpha_{\frac{1}{\xi_e}}(x_k)}.
\end{align}
Finally, combining \eqref{eq2.4} and \eqref{eq2.6} yields the desired result immediately.

This completes the proof of Lemma \ref{lemma 2.8}.
\end{proof}

\begin{lemma}{\rm\cite{[Z-PMH2026]}}\label{lemma 2.9}
Let $S$ be a gcd-closed set satisfying the condition $\mathcal G$ with $x_m \in S$.
For any integer $r$ with $1\le r\le m$, if $c_{rm}$ is defined as in
{\rm (\ref{eq2.2})} and $G_S(x_m)=\{x_{m_1},\cdots, x_{m_s}\}$, then
\begin{align*}
c_{rm}=\left\{\begin{array}{cl}
1, &\hbox{if}\ r=m,\\
(-1)^k, &\hbox{if}\ r=m_{i_1i_2\cdots i_k}\ (1 \leq i_1 < \cdots < i_k \leq s,\ 1 \leq k \leq s),\\
0, &\hbox{otherwise}.
\end{array}\right.
\end{align*}
\end{lemma}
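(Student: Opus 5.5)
The statement to prove is Lemma~\ref{lemma 2.9}, which computes $c_{rm}$ explicitly. The plan is to work directly from the definition \eqref{eq2.2}:
$$c_{rm}=\sum_{dx_r\mid x_m,\ dx_r\nmid x_t,\ x_t<x_m}\mu(d).$$

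First I reduce the general $r$ to the case $x_r\mid x_m$. If $x_r\nmid x_m$, the sum is empty and $c_{rm}=0$. If $r=m$, the only admissible $d$ is $d=1$, since any $d>1$ would need $dx_m\mid x_m$; so $c_{mm}=1$. Now suppose $x_r\mid x_m$ with $r<m$. The index set is $\{d: dx_r\mid x_m\}$ with those $d$ removed for which $dx_r$ divides some smaller element of $S$.

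Next I observe that any element $x_t<x_m$ of $S$ with $x_t\mid x_m$ divides some $x_{m_j}\in G_S(x_m)$; this follows by finiteness, climbing a maximal chain of divisors in $S$ up to $x_m$. Elements $x_t$ that do not divide $x_m$ can be replaced by $(x_t,x_m)\in S$, which is a proper divisor of $x_m$ and is again in $S$ by gcd-closedness. So the condition becomes: $dx_r\mid x_m$ and $dx_r\nmid x_{m_j}$ for all $j$. Write $D=x_m/x_r$ and $D_j=x_{m_j}/x_r$ when $x_r\mid x_{m_j}$; otherwise that constraint is vacuous, because $x_r\nmid x_{m_j}$ already forces $dx_r\nmid x_{m_j}$. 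Let $J=\{j: x_r\mid x_{m_j}\}$. Then inclusion–exclusion gives
$$c_{rm}=\sum_{I\subseteq J}(-1)^{|I|}\sum_{d\mid \gcd(D,D_i: i\in I)}\mu(d)=\sum_{I\subseteq J}(-1)^{|I|}\,[\,x_{m_I}=x_r\,],$$
where $x_{m_\emptyset}:=x_m$ and $x_{m_I}$ is the gcd of the $x_{m_i}$ with $i\in I$. This identity holds because $\gcd(D,D_i:i\in I)=x_{m_I}/x_r$.

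It remains to show that for a given $r$ at most one subset $I\subseteq\langle s\rangle$ has $x_{m_I}=x_r$. Granting that, $c_{rm}=(-1)^{|I|}$ if $x_r=x_{m_I}$ for some (necessarily unique) $I$, and $c_{rm}=0$ otherwise, which is exactly the claimed formula.

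Distinctness is the main obstacle, and Lemma~\ref{lemma 2.1} handles it. From $x_{m_I}\prod_{i\in I}P_i=x_m$ we see that $x_{m_I}$ determines the product $\prod_{i\in I}P_i$. To recover $I$ itself, I argue that $i\in I$ if and only if $x_{m_I}\mid x_{m_i}$. The forward direction is trivial. For the converse, if $j\notin I$ and $x_{m_I}\mid x_{m_j}$, then $x_{m_{I\cup\{j\}}}=x_{m_I}$, and Lemma~\ref{lemma 2.1} would give $P_j=1$, contradicting $x_{m_j}<x_m$. Hence $I=\{i: x_r\mid x_{m_i}\}=J$ is uniquely determined by $r$. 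This also shows directly that the sum above collapses to a single term exactly when $x_{m_J}=x_r$, since any $I$ with $x_{m_I}=x_r$ must equal $J$.
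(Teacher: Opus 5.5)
Your proof is correct and complete, given Lemma~\ref{lemma 2.1}. The paper does not prove this statement; it quotes it from \cite{[Z-PMH2026]}. So there is no in-paper argument to compare with, and yours is a self-contained derivation from definition \eqref{eq2.2}.

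Your argument makes clear where each hypothesis is used.

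\textbf{Gcd-closedness only.} Two steps need nothing else:
\begin{itemize}
\item reducing the constraints $dx_r\nmid x_t$ (for $x_t<x_m$) to $dx_r\nmid x_{m_j}$;
\item inclusion--exclusion over $J$ combined with $\sum_{d\mid N}\mu(d)=[N=1]$.
\end{itemize}
For $r\ne m$ with $x_r\mid x_m$, these give the general identity $c_{rm}=\sum_{\emptyset\ne I\subseteq\langle s\rangle,\ x_{m_I}=x_r}(-1)^{|I|}$. This is the expression Rota's crosscut theorem gives for the M\"obius function of the interval $[x_r,x_m]$ in $(S,\mid)$, whose coatoms are the $x_{m_j}$ with $j\in J$.

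\textbf{Condition $\mathcal G$.} It enters only at the end, through Lemma~\ref{lemma 2.1}, to make $I\mapsto x_{m_I}$ injective. That injectivity is also what makes the middle case of the statement well defined: the sign $(-1)^k$ must not depend on which subset represents $x_r$. The statement leaves this implicit, and your last paragraph settles it.

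Without $\mathcal G$ the formula really fails. Take the gcd-closed set $S=\{1,2,3,5,30\}$, so $x_1=1$ and $x_5=30$. Then $G_S(30)=\{2,3,5\}$, and every subset of size at least $2$ has gcd $1$. Directly from the definition, $c_{15}=\mu(6)+\mu(10)+\mu(15)+\mu(30)=2$, which is what your signed sum gives ($3-1$), not a single sign.
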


\section{A key lemma and proof of Theorem \ref{theorem 1.3}}
In this section, with the aid of the lemmas presented in Section 2,
we prove Theorem \ref{theorem 1.3}.
For any integers $l$ and $m$ with $1\le l,m\le |S|$,
we define the two-variable function $H$ as follows:
\begin{align}\label{eq 3.1}
H(l,m):=\frac{1}{\alpha_{\frac{1}{\xi_a}}(x_m)}\sum\limits_{x_r|x_m}\frac{c_{rm}[x_l, x_r]^b}{x_r^a}.
\end{align}
		
First of all, we prove a key lemma that plays a crucial role in the proof of Theorem \ref{theorem 1.3}.

\begin{lemma}\label{lemma 4.2}
Let $S$ be a gcd-closed set satisfying the condition $\mathcal{G}$
with $x_l,x_m\in S$, and let $G_S(x_m)=\{x_{m_1},\cdots, x_{m_s}\}$
with $s\ge 1$. Let $a$ and $b$ be positive integers. Then
\begin{align*}
H(l,m)=\left\{
\begin{aligned}
& (-1)^s \Big(\frac{x_l}{(x_l,x_m)}\Big)^b x_m^a x_{m_{1\cdots s}}^{b-a}
\prod_{j=1}^s\frac{\big(\frac{x_m}{x_{m_j}}\big)^{b-a}-1}{\big(\frac{x_m}{x_{m_j}}\big)^a-1}, \ \ \
\hbox{if}\ (x_l,x_m)\mid x_{m_{1\cdots s}},\\
& x_l^b, \ \ \  \hbox{if}\ x_m\mid x_l,\\
& (-1)^k \Big(\frac{x_l}{(x_l,x_m)}\Big)^bx_m^ax_{m_{1\cdots k}}^{b-a}
\prod_{j=1}^k\frac{\big(\frac{x_m}{x_{m_j}}\big)^{b-a}-1}{\big(\frac{x_m}{x_{m_j}}\big)^a-1},\ \ \
\hbox{if}\ (x_l,x_m)\mid x_{m_{1\cdots k}}\\
&\hbox{for some}\ 1\le k\le s-1\ \hbox{and}\ (x_l,x_m) \nmid x_{m_j}\ \hbox{for all}\ k+1 \le j \le s.
\end{aligned}
\right.
\end{align*}
\end{lemma}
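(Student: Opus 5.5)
By Lemma \ref{lemma 2.9}, the only indices $r$ with $x_r\mid x_m$ and $c_{rm}\neq 0$ are $r=m$ and $r=m_{i_1\cdots i_t}$. So the sum defining $H(l,m)$ becomes an inclusion--exclusion sum:
\begin{align*}
\alpha_{\frac{1}{\xi_a}}(x_m)H(l,m)=\frac{[x_l,x_m]^b}{x_m^a}+\sum_{t=1}^s(-1)^t\sum_{1\le i_1<\cdots<i_t\le s}\frac{[x_l,x_{m_{i_1\cdots i_t}}]^b}{x_{m_{i_1\cdots i_t}}^a}.
\end{align*}
The plan is to evaluate this sum in each of the three cases, then divide by the closed form of $\alpha_{\frac{1}{\xi_a}}(x_m)$ from Lemma \ref{lemma 2.7}. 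Throughout we write $[x_l,y]=x_ly/(x_l,y)$.

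\textbf{Case $(x_l,x_m)\mid x_{m_{1\cdots s}}$.} Then $(x_l,x_{m_{i_1\cdots i_t}})=(x_l,x_m)$ for every subset, since $(x_l,x_m)$ divides each $x_{m_{i_1\cdots i_t}}$, which divides $x_m$. Hence every term equals $\big(\frac{x_l}{(x_l,x_m)}\big)^b y^{b-a}$ with $y$ running over $x_m$ and the $x_{m_{i_1\cdots i_t}}$. The sum is then $\big(\frac{x_l}{(x_l,x_m)}\big)^b\beta_{\xi_{b-a}}(s,x_m)$, and Lemma \ref{lemma 2.6} gives $x_{m_{1\cdots s}}^{b-a}\prod_j(P_j^{b-a}-1)$. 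Lemma \ref{lemma 2.7} gives
\[
\alpha_{\frac{1}{\xi_a}}(x_m)=x_{m_{1\cdots s}}^{-a}\prod_j(P_j^{-a}-1)=(-1)^s x_{m_{1\cdots s}}^{-a}\prod_jP_j^{-a}\prod_j(P_j^a-1).
\]
By Lemma \ref{lemma 2.1} with $k=s$, $x_{m_{1\cdots s}}\prod_jP_j=x_m$, so $x_{m_{1\cdots s}}^{-a}\prod_jP_j^{-a}=x_m^{-a}$. Dividing gives the first formula. (If $b=a$, both sides vanish consistently, since the numerator is zero; no issue arises.)

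\textbf{Case $x_m\mid x_l$.} Then $[x_l,y]=x_l$ for every $y\mid x_m$. The sum becomes $x_l^b\sum(\pm)y^{-a}=x_l^b\,\alpha_{\frac{1}{\xi_a}}(x_m)$ by Lemma \ref{lemma 2.4}, because the subsets $J$ of $G_S(x_m)$ correspond exactly to the gcds $x_{m_{i_1\cdots i_t}}$. Hence $H=x_l^b$.

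\textbf{Mixed case.} Here $(x_l,x_m)\mid x_{m_{1\cdots k}}$ and $(x_l,x_m)\nmid x_{m_j}$ for $j>k$, so the divisibility pattern is mixed. Split each index set into $I\subseteq\{1,\dots,k\}$ and $J\subseteq\{k+1,\dots,s\}$, and group terms by $I$.

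For $I=\emptyset$: pair $x_m$ with $x_{m_J}$. Lemma \ref{lemma 2.2}(ii) gives $[x_l,x_{m_J}]=[x_l,x_m]$, so this group is $[x_l,x_m]^b\sum_{J}(-1)^{|J|}x_{m_J}^{-a}$ with $J$ ranging over subsets of $\{k+1,\dots,s\}$, including $x_{m_\emptyset}=x_m$.

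For $I\neq\emptyset$: Lemma \ref{lemma 2.2}(iii) gives $[x_l,x_{m_{I\cup J}}]=[x_l,x_{m_I}]$, so this group is $(-1)^{|I|}[x_l,x_{m_I}]^b\sum_J(-1)^{|J|}x_{m_{I\cup J}}^{-a}$.

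The inner alternating sums over $J$ should factor. Restricting condition $\mathcal G$ via Lemma \ref{lemma 2.1}, we get $x_{m_{I\cup J}}=x_m/\prod_{i\in I\cup J}P_i$. Hence
\[
\sum_J(-1)^{|J|}x_{m_{I\cup J}}^{-a}=x_{m_I}^{-a}\prod_{j=k+1}^s(1-P_j^a).
\]
This common factor $\prod_{j>k}(1-P_j^a)$ appears in every group and cancels against the corresponding part of $\alpha_{\frac{1}{\xi_a}}(x_m)$, leaving a sum over $I\subseteq\{1,\dots,k\}$ only. On that part, Lemma \ref{lemma 2.2}(i) gives $(x_l,x_{m_I})=(x_l,x_m)$, so the remaining sum is $\big(\frac{x_l}{(x_l,x_m)}\big)^b\sum_I(-1)^{|I|}x_{m_I}^{b-a}$ with $x_{m_\emptyset}=x_m$. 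Again via Lemma \ref{lemma 2.1}, this equals $x_{m_{1\cdots k}}^{b-a}\prod_{j\le k}(P_j^{b-a}-1)$, i.e. the factorization of Lemma \ref{lemma 2.6} applied to $\{x_{m_1},\dots,x_{m_k}\}$, which that lemma explicitly allows as a subset of $G_S(x_m)$. Dividing by the remaining part of $\alpha$ and simplifying the powers of $P_j$ with $x_{m_{1\cdots k}}\prod_{j\le k}P_j=x_m$ gives the third formula.

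I expect the main obstacle to be this mixed-case bookkeeping. In particular, one must check that the $I=\emptyset$ group combines correctly with the others: its term $[x_l,x_m]^b$ must be rewritten as $\big(\frac{x_l}{(x_l,x_m)}\big)^bx_m^b$, and the factor $\prod_{j>k}(1-P_j^a)$ must have exactly the sign and form needed for the cancellation against $\alpha_{\frac{1}{\xi_a}}(x_m)$ to go through.
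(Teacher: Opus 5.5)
Your proposal is correct and follows the paper's proof: it uses the same expansion via Lemma~\ref{lemma 2.9}, the same three cases, and the same Lemmas~\ref{lemma 2.1}, \ref{lemma 2.2}, \ref{lemma 2.6} and \ref{lemma 2.7}, with the mixed-case sum factoring into a $\{1,\dots,k\}$-part times a $\{k+1,\dots,s\}$-part. The only differences are bookkeeping and do not change the argument:
\begin{itemize}
\item You group the mixed-case terms by $I\subseteq\{1,\dots,k\}$. The paper instead splits them into $E_1$ (the terms with $J=\emptyset$) and $E_2$ (the terms with $J\neq\emptyset$).
\item You evaluate the $J$-sum directly as $x_{m_I}^{-a}\prod_{j>k}(1-P_j^a)$ and cancel it against $\alpha_{\frac{1}{\xi_a}}(x_m)=x_m^{-a}\prod_{j=1}^s(1-P_j^a)$. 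The paper recovers the same factor through $x_m^a\beta_{\xi_{-a}}(s-k,x_m)$.
\end{itemize}
The sign and $I=\emptyset$ check you flagged does go through.
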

		
\begin{proof}
By \eqref{eq 3.1} and Lemma \ref{lemma 2.9}, we have
\begin{align}\label{eq 3.2}
\alpha_{\frac{1}{\xi_a}}(x_m)H(l,m)=& \sum\limits_{x_r|x_m}\frac{c_{rm}[x_l, x_r]^b}{x_r^a}\notag\\
=&\frac{[x_l,x_m]^b}{x_m^a}+\sum\limits_{t=1}^s(-1)^t\sum\limits_{1\le j_{1}<\cdots<j_{t}\le s}
\frac{[x_l,x_{m_{j_{1}\cdots j_{t}}}]^b}{x_{m_{j_{1}\cdots j_{t}}}^a}.
\end{align}
	
Since $(x_l,x_m)|x_m$, it follows that either $(x_l,x_m)|x_{m_{1\cdots s}}$, or $(x_l,x_m)=x_m $,
or there exist an integer $k$ with $1\le k\le s-1$ and
$k$ integers $i_1,\cdots, i_k$ with $1\le i_1<\cdots <i_k\le s$
such that $(x_l,x_m)|x_{m_{i_1\cdots i_k}}$
and $(x_l,x_m) \nmid x_{m_j} $ for all $j \in \langle s\rangle\setminus \{ i_{1 },\cdots, i_{k}\}$.
So we only need to consider the following three cases.
			
{\sc Case 1.} $(x_l,x_m)|x_{m_{1\cdots s}}$. For $1 \le j_1<\cdots <j_t\le s$ with $1\le t\le s$,
since $x_{m_{1\cdots s}}| x_{m_{j_1\cdots j_t}}|x_m$, we have $(x_l,x_m)|x_{m_{j_1\cdots j_t}}|x_m$
and
\begin{align*}
(x_l,x_{m_{j_1\cdots j_t}}) = (x_l,(x_{m_{j_1\cdots j_t}}, x_m))
=((x_l,x_m),x_{m_{j_1\cdots j_t}})
=(x_l,x_m).
\end{align*}
Thus by \eqref{eq 3.2} and Lemma \ref{lemma 2.6}, we have
\begin{align*}
\alpha_{\frac{1}{\xi_a}}(x_m)H(l,m)=&\dfrac{x_l^bx_m^b}{x_m^a(x_l,x_m)^b}+\sum\limits_{t=1}^s(-1)^t
\sum\limits_{1\le j_{1}<\cdots<j_{t}\le s}\dfrac{x_l^bx^b_{m_{j_1\cdots j_t}}}{x^a_{m_{j_1\cdots j_t}}(x_l,x_{m_{j_1\cdots j_t}})^b}\notag\\
=&\dfrac{x_l^bx_m^{b-a}}{(x_l,x_m)^b} +\sum\limits_{t=1}^s(-1)^t
\sum\limits_{1\le j_{1}<\cdots<j_{t}\le s}\dfrac{x_l^bx^b_{m_{j_1\cdots j_t}}}{x^a_{m_{j_1\cdots j_t}}(x_l,x_m)^b}\notag\\
=&\dfrac{x_l^bx_m^{b-a}}{(x_l,x_m)^b}+\dfrac{x_l^b}{(x_l,x_m)^b}\sum\limits_{t=1}^s(-1)^t
\sum\limits_{1\le j_{1}<\cdots<j_{t}\le s}\frac{x^b_{m_{j_1\cdots j_t}}}{x^a_{m_{j_1\cdots j_t}}}\notag\\
=&\dfrac{x_l^b}{(x_l,x_m)^b}\Big(x_m^{b-a}+\sum\limits_{t=1}^s(-1)^t
\sum\limits_{1\le j_{1}<\cdots<j_{t}\le s}x_{m_{j_1\cdots j_t}}^{b-a}\Big)\notag\\
=&\dfrac{x_l^b}{(x_l,x_m)^b}\beta_{\xi_{b-a}}(s,x_m)\notag\\
=&\dfrac{x_l^b}{(x_l,x_m)^b}x_{m_{1\cdots s}}^{b-a}\prod_{j=1}^s\big(\Big(\frac{x_m}{x_{m_j}}\Big)^{b-a}-1\big).\notag
\end{align*}
Then it follows from Lemmas \ref{lemma 2.1} and \ref{lemma 2.7} that
\begin{align*}
H(l,m)=&\Big(\frac{x_l}{(x_l,x_m)}\Big)^b\frac{x_{m_{1\cdots s}}^{b-a}\prod_{j=1}^s\big(\big(\frac{x_m}{x_{m_j}}\big)^{b-a}-1\big)}
{x_{m_{1\cdots s}}^{-a}\prod_{j=1}^s\big(\big(\frac{x_m}{x_{m_j}}\big)^{-a}-1\big)}\\
=&\Big(\frac{x_l}{(x_l,x_m)}\Big)^bx_{m_{1\cdots s}}^b(-1)^s\prod_{j=1}^s\Big(\frac{x_m}{x_{m_j}}\Big)^a\cdot \prod_{j=1}^s
\frac{\big(\frac{x_m}{x_{m_j}}\big)^{b-a}-1}{\big(\frac{x_m}{x_{m_j}}\big)^a-1}\\
=&\Big(\frac{x_l}{(x_l,x_m)}\Big)^bx_{m_{1\cdots s}}^b(-1)^s\Big(\frac{x_m}{x_{m_{1\cdots s}}}\Big)^a
\prod_{j=1}^s \frac{\big(\frac{x_m}{x_{m_j}}\big)^{b-a}-1}{\big(\frac{x_m}{x_{m_j}}\big)^a-1}\\
=&\Big(\frac{x_l}{(x_l,x_m)}\Big)^bx_{m_{1\cdots s}}^{b-a}x_m^a (-1)^s
\prod_{j=1}^s \frac{\big(\frac{x_m}{x_{m_j}}\big)^{b-a}-1}{\big(\frac{x_m}{x_{m_j}}\big)^a-1}\\
=&(-1)^s \Big(\frac{x_l}{(x_l,x_m)}\Big)^b x_m^a x_{m_{1\cdots s}}^{b-a}
\prod_{j=1}^s \frac{\big(\frac{x_m}{x_{m_j}}\big)^{b-a}-1}{\big(\frac{x_m}{x_{m_j}}\big)^a-1}
\end{align*}
as expected.
		
{\sc Case 2.} $(x_l,x_m)=x_m$. Then $x_m|x_l$. Clearly,
$x_{m_{j_1\cdots j_t}}|x_m|x_l$ for $1 \le j_1<\cdots <j_t\le s$. This implies that
$$[x_l,x_m]=[x_l, x_{m_{j_1j_2\cdots j_t}}]=x_l.$$
Therefore, by \eqref{eq 3.2}, Lemmas \ref{lemma 2.6} and \ref{lemma 2.7}, we have
\begin{align*}
\alpha_{\frac{1}{\xi_a}}(x_m)H(l,m)=&\frac{x_l^b}{x_m^a}+\sum\limits_{t=1}^s(-1)^t\sum\limits_{1\le j_{1}<\cdots<j_{t}\le s}
\frac{[x_l, x_{m_{j_1\cdots j_t}}]^b}{x_{m_{j_1\cdots j_t}}^a}\\
=&\frac{x_l^b}{x_m^a}+\sum\limits_{t=1}^s(-1)^t\sum\limits_{1\le j_{1}<\cdots<j_{t}\le s}\frac{x_l^b}{x_{m_{j_1\cdots j_t}}^a}\\
=&x_l^b\Big(x_m^{-a}+\sum\limits_{t=1}^s(-1)^t\sum\limits_{1\le j_{1}<\cdots<j_{t}\le s}x_{m_{j_1\cdots j_t}}^{-a}\Big)\\
=&x_l^b\beta_{\xi_{-a}}(s,x_m)\\
=& x_l^b\alpha_{\frac{1}{\xi_a}}(x_m),
\end{align*}
and so $H(l,m)=x_l^b$ as desired.
			
{\sc Case 3.} There is an integer $k$ with $1\le k\le s-1$ and $k$ integers $i_1,\cdots, i_k$
with $1\le i_1<\cdots <i_k\le s-1$ such that $(x_l,x_m)|x_{m_{i_1\cdots i_k}}$
and $(x_l,x_m) \nmid x_{m_j} $ for all $j \in \langle s\rangle \setminus\{i_{1}, \cdots, i_{k}\}$.
WLOG, one may let $(x_l,x_m)\mid x_{m_{1\cdots k}}$ and $(x_l,x_m) \nmid x_{m_j} $ for all $k+1 \le j \le s$.
By \eqref{eq 3.2}, we get that
\begin{align}
\alpha_{\frac{1}{\xi_a}}(x_m)H(l,m)=&\frac{[x_l,x_m]^b}{x_m^a}+\sum_{h=1}^{k}(-1)^{h}\sum\limits_{1 \le i_1<\cdots<i_h\le k}
\frac{[x_l,x_{m_{i_1\cdots i_h}}]^b}{x_{m_{i_1\cdots i_h}}^a}\notag\\
&+\sum_{t=1}^{s-k}(-1)^{t}\sum\limits_{k+1 \le j_1<\cdots <j_t \le s}\frac{[x_l,x_{m_{j_1\cdots j_t}}]^b}{x_{m_{j_1\cdots j_t}}^a}\notag\\
&+\sum_{h=1}^{k}\sum_{t=1}^{s-k}(-1)^{h+t}\sum\limits_{1 \le i_1<\cdots<i_h\le k \atop k+1 \le j_1<\cdots <j_t \le s}
\frac{[x_l,x_{m_{i_1\cdots i_hj_1\cdots j_t}}]^b}{x_{m_{i_1\cdots i_hj_1\cdots j_t}}^a}\notag\\
=&E_1+E_2,\notag
\end{align}
where
\begin{equation*}
E_1:=\frac{[x_l,x_m]^b}{x_m^a}
+\sum_{h=1}^{k}(-1)^{h}\sum\limits_{1 \le i_1<\cdots<i_h\le k}\frac{[x_l,x_{m_{i_1\cdots i_h}}]^b}{x_{m_{i_1\cdots i_h}}^a}
\end{equation*}
and
\begin{align*}
E_2:=&\sum_{t=1}^{s-k}(-1)^{t}\sum\limits_{k+1 \le j_1<\cdots <j_t \le s}\frac{[x_l,x_{m_{j_1\cdots j_t}}]^b}{x_{m_{j_1\cdots j_t}}^a}\nonumber\\
&+\sum_{h=1}^{k}\sum_{t=1}^{s-k}(-1)^{h+t}\sum\limits_{1 \le i_1<\cdots<i_h\le k \atop k+1 \le j_1<\cdots <j_t \le s}
\frac{[x_l,x_{m_{i_1\cdots i_hj_1\cdots j_t}}]^b}{x_{m_{i_1\cdots i_hj_1\cdots j_t}}^a}.
\end{align*}
			
First of all, we compute $E_1$. By Lemma \ref{lemma 2.2} (i) and Lemma \ref{lemma 2.6}, we obtain that
\begin{align}
E_1&=\frac{[x_l,x_m]^b}{x^a_m}+\sum_{h=1}^{k}(-1)^{h}\sum\limits_{1 \le i_1<\cdots<i_h\le k}
\frac{[x_l,x_{m_{i_1\cdots i_h}}]^b}{x_{m_{i_1\cdots i_h}}^a}\nonumber\\
&=\frac{x_l^bx_m^b}{x^a_m(x_l,x_m)^b}+\sum_{h=1}^{k}(-1)^{h}\sum\limits_{1 \le i_1<\cdots<i_h\le k}
\frac{x_l^bx_{m_{i_1\cdots i_h}}^b}{x_{m_{i_1\cdots i_h}}^a(x_l,x_{m_{i_1\cdots i_h}})^b}\nonumber\\
&=\frac{x_l^bx_m^{b-a}}{(x_l,x_m)^b}+\sum_{h=1}^{k}(-1)^{h}\sum\limits_{1\le i_1<\cdots<i_h\le k}
\frac{x_l^bx_{m_{i_1\cdots i_h}}^{b-a}}{(x_l,x_m)^b}\nonumber\\
&=\Big(\frac{x_l}{(x_l,x_m)}\Big)^b\Big(x_m^{b-a}+\sum_{h=1}^{k}(-1)^{h}
\sum\limits_{1\le i_1<\cdots<i_h\le k}x_{m_{i_1\cdots i_h}}^{b-a}\Big)\nonumber\\
&=\Big(\frac{x_l}{(x_l,x_m)}\Big)^b\beta_{\xi_{b-a}}(k,x_m)\notag\\
&=\Big(\frac{x_l}{(x_l,x_m)}\Big)^bx_{m_{1\cdots k}}^{b-a}\prod_{j=1}^k\Big(\Big(\frac{x_m}{x_{m_j}}\Big)^{b-a}-1\Big).\notag
\end{align}

Subsequently, we calculate $E_2$. From Lemmas \ref{lemma 2.1}, \ref{lemma 2.2} (i) to (iii),
and \ref{lemma 2.6}, we can deduce that
\begin{align}
E_2=&\sum_{t=1}^{s-k}(-1)^{t}\sum\limits_{k+1 \le j_1<\cdots <j_t \le s}\frac{[x_l,x_{m_{j_1\cdots j_t}}]^b}{x_{m_{j_1\cdots j_t}}^a}\notag\\
&+\sum_{h=1}^{k}\sum_{t=1}^{s-k}(-1)^{h+t}\sum\limits_{1 \le i_1<\cdots<i_h\le k \atop k+1 \le j_1<\cdots <j_t\le s}
\frac{[x_l,x_{m_{i_1\cdots i_hj_1\cdots j_t}}]^b}{x_{m_{i_1\cdots i_hj_1\cdots j_t}}^a}\notag\\
=&\sum_{t=1}^{s-k}(-1)^{t}\sum\limits_{k+1\le j_1<\cdots <j_t \le s}\frac{[x_l,x_m]^b}{x_{m_{j_1\cdots j_t}}^a}\notag\\
&+\sum_{h=1}^{k}\sum_{t=1}^{s-k}(-1)^{h+t}\sum\limits_{1\le i_1<\cdots<i_h\le k \atop k+1\le j_1<\cdots <j_t\le s}
\frac{[x_l,x_{m_{i_1\cdots i_h}}]^b}{x_{m_{i_1\cdots i_hj_1\cdots j_t}}^a}\notag\\
=&\sum_{t=1}^{s-k}(-1)^{t}\sum\limits_{k+1\le j_1<\cdots <j_t\le s}\frac{x_l^bx_m^b}{(x_l,x_m)^bx_{m_{j_1\cdots j_t}}^a}\notag\\
&+\sum_{h=1}^{k}\sum_{t=1}^{s-k}(-1)^{h+t}\sum\limits_{1\le i_1<\cdots<i_h\le k \atop k+1\le j_1<\cdots <j_t\le s}
\frac{x_l^bx_{m_{i_1\cdots i_h}}^b}{(x_l,x_{m_{i_1\cdots i_h}})^bx_{m_{i_1\cdots i_hj_1\cdots j_t}}^a}\notag\\
=&\sum_{t=1}^{s-k}(-1)^{t}\sum\limits_{k+1\le j_1<\cdots <j_t\le s}\frac{x_l^bx_m^b}{(x_l,x_m)^bx_{m_{j_1\cdots j_t}}^a}\notag\\
&+\sum_{h=1}^{k}\sum_{t=1}^{s-k}(-1)^{h+t}\sum\limits_{1\le i_1<\cdots<i_h\le k\atop k+1 \le j_1<\cdots <j_t\le s}
\frac{x_l^bx_{m_{i_1\cdots i_h}}^b}{(x_l,x_m)^bx_{m_{i_1\cdots i_hj_1\cdots j_t}}^a}\notag\\
=&\Big(\frac{x_l}{(x_l,x_m)}\Big)^b\Big(\sum_{t=1}^{s-k}(-1)^{t}\sum\limits_{k+1\le j_1<\cdots <j_t\le s}\frac{x_m^b}{\frac{x_m^a}{(P_{j_1}\cdots P_{j_t})^a}}\notag\\
&+\sum_{h=1}^{k}\sum_{t=1}^{s-k}(-1)^{h+t}\sum\limits_{1\le i_1<\cdots<i_h\le k \atop k+1\le j_1<\cdots <j_t \le s}
\frac{\frac{x_m^b}{(P_{i_1}\cdots P_{i_h})^b}}{\frac{x_m^a}{(P_{i_1}\cdots P_{i_h}P_{j_1}\cdots P_{j_t})^a}}\Big)\notag\\
=&\Big(\frac{x_l}{(x_l,x_m)}\Big)^b\Big(\sum_{t=1}^{s-k}(-1)^{t}\sum\limits_{k+1\le j_1<\cdots <j_t \le s}x_m^{b-a}(P_{j_1}\cdots P_{j_t})^a\notag\\
&+\sum_{h=1}^{k}\sum_{t=1}^{s-k}(-1)^{h+t}\sum\limits_{1\le i_1<\cdots<i_h\le k\atop k+1\le j_1<\cdots <j_t\le s}
\frac{x_m^{b-a}}{(P_{i_1}\cdots P_{i_h})^{b-a}}\cdot (P_{j_1}\cdots P_{j_t})^a\Big)\notag\\
=&\Big(\frac{x_l}{(x_l,x_m)}\Big)^b\Big(\sum_{t=1}^{s-k}(-1)^{t}\sum\limits_{k+1\le j_1<\cdots <j_t\le s}(P_{j_1}\cdots P_{j_t})^a\Big(x_m^{b-a}\notag\\
&+\sum_{h=1}^{k}(-1)^h\sum\limits_{1 \le i_1<\cdots<i_h\le k}\frac{x_m^{b-a}}{(P_{i_1}\cdots P_{i_h})^{b-a}}\Big)\Big)\notag\\
=&\Big(\frac{x_l}{(x_l,x_m)}\Big)^b\Big(\sum_{t=1}^{s-k}(-1)^{t}\sum\limits_{k+1 \le j_1<\cdots <j_t \le s}(P_{j_1}\cdots P_{j_t})^a\Big(x_m^{b-a}\notag\\
&+\sum_{h=1}^{k}(-1)^h\sum\limits_{1\le i_1<\cdots<i_h\le k}x_{m_{i_1\cdots i_h}}^{b-a}\Big)\Big)\notag\\
=&\Big(\frac{x_l}{(x_l,x_m)}\Big)^b\Big(\sum_{t=1}^{s-k}(-1)^{t}\sum\limits_{k+1\le j_1<\cdots <j_t \le s}(P_{j_1}\cdots P_{j_t})^a\beta_{\xi_{b-a}}(k,x_m)\Big)\notag\\
=&\Big(\frac{x_l}{(x_l,x_m)}\Big)^b\Big(\sum_{t=1}^{s-k}(-1)^{t}\sum\limits_{k+1\le j_1<\cdots <j_t \le s}(P_{j_1}\cdots P_{j_t})^ax_{m_{1\cdots k}}^{b-a}\notag\\
&\times \prod_{j=1}^k\Big(\Big(\frac{x_m}{x_{m_j}}\Big)^{b-a}-1\Big)\Big)\notag\\
=&\Big(\frac{x_l}{(x_l,x_m)}\Big)^bx_{m_{1\cdots k}}^{b-a}
\prod_{j=1}^k\Big(\Big(\frac{x_m}{x_{m_j}}\Big)^{b-a}-1\Big)\sum_{t=1}^{s-k}(-1)^{t}\sum\limits_{k+1\le j_1<\cdots <j_t \le s}(P_{j_1}\cdots P_{j_t})^a\notag\\
=&\Big(\frac{x_l}{(x_l,x_m)}\Big)^bx_{m_{1\cdots k}}^{b-a}
\prod_{j=1}^k\Big(\Big(\frac{x_m}{x_{m_j}}\Big)^{b-a}-1\Big)\sum_{t=1}^{s-k}(-1)^{t}\sum\limits_{k+1\le j_1<\cdots <j_t \le s}
\frac{x_m^a}{\frac{x_m^a}{(P_{j_1}\cdots P_{j_t})^a}}\notag\\
=&\Big(\frac{x_l}{(x_l,x_m)}\Big)^bx_{m_{1\cdots k}}^{b-a}
\prod_{j=1}^k\Big(\Big(\frac{x_m}{x_{m_j}}\Big)^{b-a}-1\Big)\sum_{t=1}^{s-k}(-1)^{t}\sum\limits_{k+1 \le j_1<\cdots <j_t \le s}\frac{x_m^a}{x_{m_{j_1\cdots j_t}}^a}\notag\\
=&\Big(\frac{x_l}{(x_l,x_m)}\Big)^bx_{m_{1\cdots k}}^{b-a}\prod_{j=1}^k\Big(\Big(\frac{x_m}{x_{m_j}}\Big)^{b-a}-1\Big)\notag\\
&\times\Big(x_m^a\Big(x_m^{-a}+\sum_{t=1}^{s-k}(-1)^{t}\sum^{s-k}\limits_{k+1 \le j_1<\cdots <j_t \le s}x_{m_{j_1\cdots j_t}}^{-a}\Big)-1\Big)\notag\\
=&\Big(\frac{x_l}{(x_l,x_m)}\Big)^bx_{m_{1\cdots k}}^{b-a}\prod_{j=1}^k\Big(\Big(\frac{x_m}{x_{m_j}}\Big)^{b-a}-1\Big)
(x_m^a\beta_{\xi_{-a}}(s-k,x_m)-1)\notag\\
=&\Big(\frac{x_l}{(x_l,x_m)}\Big)^bx_{m_{1\cdots k}}^{b-a}\prod_{j=1}^k\Big(\Big(\frac{x_m}{x_{m_j}}\Big)^{b-a}-1\Big)
\Big(x_m^a\Big(x_{m_{(k+1)\cdots s}}^{-a}\prod_{j=k+1}^s\Big(\Big(\frac{x_m}{x_{m_j}}\Big)^{-a}-1\Big)-1\Big).\notag
\end{align}
Thus
$$
E_1+E_2=\Big(\frac{x_l}{(x_l,x_m)}\Big)^bx_{m_{1\cdots k}}^{b-a}x_m^ax_{m_{(k+1)\cdots s}}^{-a}
\prod\limits_{j=1}^k\Big(\Big(\frac{x_m}{x_{m_j}}\Big)^{b-a}-1\Big)\prod\limits_{j=k+1}^s\Big(\Big(\frac{x_m}{x_{m_j}}\Big)^{-a}-1\Big),
$$
and so by Lemmas \ref{lemma 2.1} and \ref{lemma 2.7}, we have
\begin{align*}
H(l,m)=&\frac{E_1+E_2}{\alpha_{\frac{1}{\xi_a}}(x_m)}\\
=&\Big(\frac{x_l}{(x_l,x_m)}\Big)^bx_m^ax_{m_{1\cdots k}}^{b-a}x_{m_{(k+1)\cdots s}}^{-a}x_{m_{1\cdots s}}^a
\frac{\prod_{j=1}^k\big(\big(\frac{x_m}{x_{m_j}}\big)^{b-a}-1\big)}{\prod_{j=1}^k\big(\big(\frac{x_m}{x_{m_j}}\big)^{-a}-1\big)}\\
=&\Big(\frac{x_l}{(x_l,x_m)}\Big)^bx_m^ax_{m_{1\cdots k}}^{b-a}x_{m_{(k+1)\cdots s}}^{-a}x_{m_{1\cdots s}}^a\\
&\times(-1)^k\Big(\prod_{j=1}^k\frac{x_m}{x_{m_j}}\Big)^a\frac{\prod_{j=1}^k\big(\big(\frac{x_m}{x_{m_j}}\big)^{b-a}-1\big)}
{\prod_{j=1}^k\big(\big(\frac{x_m}{x_{m_j}}\big)^a-1\big)}\\
=&\Big(\frac{x_l}{(x_l,x_m)}\Big)^bx_m^ax_{m_{1\cdots k}}^{b-a}x_{m_{(k+1)\cdots s}}^{-a}x_{m_{1\cdots s}}^a\\
&\times(-1)^k(P_1\cdots P_k)^a\prod_{j=1}^k\frac{\big(\frac{x_m}{x_{m_j}}\big)^{b-a}-1}{\big(\frac{x_m}{x_{m_j}}\big)^a-1}\\
=&\Big(\frac{x_l}{(x_l,x_m)}\Big)^bx_m^ax_{m_{1\cdots k}}^{b-a}\frac{(P_{k+1}\cdots P_s)^a}{x_m^a}x_{m_{1\cdots s}}^a\\
&\times(-1)^k(P_1\cdots P_k)^a\prod_{j=1}^k\frac{\big(\frac{x_m}{x_{m_j}}\big)^{b-a}-1}{\big(\frac{x_m}{x_{m_j}}\big)^a-1}\\
=&\Big(\frac{x_l}{(x_l,x_m)}\Big)^bx_m^ax_{m_{1\cdots k}}^{b-a}(-1)^k
\prod_{j=1}^k\frac{\big(\frac{x_m}{x_{m_j}}\big)^{b-a}-1}{\big(\frac{x_m}{x_{m_j}}\big)^a-1}\\
=& (-1)^k \Big(\frac{x_l}{(x_l,x_m)}\Big)^bx_m^ax_{m_{1\cdots k}}^{b-a}
\prod_{j=1}^k\frac{\big(\frac{x_m}{x_{m_j}}\big)^{b-a}-1}{\big(\frac{x_m}{x_{m_j}}\big)^a-1}
\end{align*}
as asserted.

This finishes the proof of Lemma \ref{lemma 4.2}.
\end{proof}

Finally, we conclude this paper by giving the proof of Theorem \ref{theorem 1.3}.
		
\begin{proof}[Proof of Theorem \ref{theorem 1.3}]
Let $S$ be a gcd-closed set which satisfies the condition $\mathcal{G}$
and let $a$ and $b$ be positive integers with $a\mid b$.
For any integers $i$ and $j$ with $1 \le i,j \le |S|$, by Lemma \ref{lemma 2.8}, we have
\begin{align*}
{\left([S^b][S^a]^{-1}\right)}_{ij}
=&\sum_{r=1}^{|S|}[x_i, x_r]^b\dfrac{1}{x_r^ax_j^a}\sum_{x_r|x_t\atop x_j|x_t}
\dfrac{c_{rt}c_{jt}}{\alpha_{\frac{1}{\xi_a}}(x_t)}\\
=&\sum\limits_{x_j|x_t}c_{jt}\frac{1}{x_j^a\alpha_{\frac{1}{\xi_a}}(x_t)}\sum\limits_{x_r|x_t}
\frac{c_{rt}[x_i, x_r]^b}{x_r^a}\\
=&\sum\limits_{x_j|x_t}c_{jt}\frac{H(i,t)}{x_j^a}
\end{align*}
with $H(i,t)$ being defined as in \eqref{eq 3.1}.
		
In what follows we prove that
$${\left( [S^b][S^a]^{-1}\right)}_{ij}\in\mathbb Z$$
for all integers $i$ and $j$ with $1\le i,j \le |S|$.
By the definition of $c_{jt} $ as in \eqref{eq2.2}, we know that $ c_{jt} \in \mathbb{Z}$.
So we need only to show that for any positive integer $t$ with $x_j\mid x_t$, we have
$$\frac{H(i,t)}{x_j^a}\in \mathbb{Z}.$$
This will be shown in what follows.
			
For any given integers $i$ and $t$ with $1 \le i,t\le |S|$ and $x_j|x_t$, we consider the following two cases.
			
{\sc Case 1.} $t=1$. Then $x_t=x_1=\gcd(S)$. At this moment, we must have $x_j=x_1$.
By Lemmas \ref{lemma 2.4} and \ref{lemma 2.9}, we obtain that
\begin{align*}
\frac{H(i,t)}{x_j^a}=\frac{H(i,1)}{x_1^a}=\frac{1}{x_1^a\alpha_{\frac{1}{\xi_a}}(x_1)}\sum\limits_{x_r|x_1}\frac{c_{r1}
[x_i, x_r]^b}{x_r^a}=\frac{[x_i,x_1]^b}{x_1^a}=\frac{x_i^b}{x_1^a}=x_i^{b-a}\Big(\frac{x_i}{x_1}\Big)^a\in \mathbb Z
\end{align*}
as one expects.
				
{\sc Case 2.} $t\ge 2$. Since $S$ is gcd closed, one has $x_1\mid x_t$ and so $|G_S(x_t)|\ge 1$.
Write $G_S(x_t):=\{x_{t_1},\cdots, x_{t_s}\}$ with $s\ge 1$.
Since $a\mid b$ and $S$ satisfies the condition $\mathcal{G}$ as well as $x_j\mid x_t$, by Lemma \ref{lemma 4.2}, we have
\begin{align*}
\frac{H(i,t)}{x_j^a}=\left\{
\begin{aligned}
&(-1)^s\Big(\frac{x_i}{(x_i,x_t)}\Big)^bx_{t_{1\cdots s}}^{b-a}\Big(\frac{x_t}{x_j}\Big)^a
\prod_{j=1}^s\frac{\big(\frac{x_t}{x_{t_j}}\big)^{b-a}-1}{\big(\frac{x_t}{x_{t_j}}\big)^a-1}\in\mathbb Z,
\ \ \hbox{if}\ (x_i,x_t)\mid x_{t_{1\cdots s}},\\
&x_i^{b-a}\Big(\frac{x_i}{x_j}\Big)^a\in\mathbb Z, \ \ \hbox{if}\ x_t\mid x_i,\\
&(-1)^k\Big(\frac{x_i}{(x_i,x_t)}\Big)^b\Big(\frac{x_t}{x_j}\Big)^ax_{t_{1\cdots k}}^{b-a}
\prod_{j=1}^k\frac{\big(\frac{x_t}{x_{t_j}}\big)^{b-a}-1}{\big(\frac{x_t}{x_{t_j}}\big)^a-1}\in\mathbb Z,
\ \hbox{if}\ (x_i,x_t)\mid x_{t_{1\cdots k}}\\
&\hbox{for some}\ 1\le k\le s-1\ \hbox{and}\ (x_i,x_t) \nmid x_{t_j}\ \hbox{for all}\ k+1 \le j \le s.
\end{aligned}
\right.
\end{align*}
It then follows that
$$\frac{H(i,t)}{x_j^a}\in\mathbb Z$$
as required.

This concludes the proof of Theorem \ref{theorem 1.3}.
\end{proof}
		
\begin{center}
{\sc Acknowledgements}
\end{center}
The author is grateful to the anonymous referee for insightful and invaluable
suggestions and comments that improved greatly the presentation of the paper.\\

	\end{document}